\documentclass[11pt,reqno]{amsart}

\usepackage[margin=1.1in]{geometry}
\usepackage[T1]{fontenc}
\usepackage{amsmath,amssymb,amsthm,mathtools,mathrsfs, tikz}
\usepackage{microtype}
\usepackage[hidelinks]{hyperref}
\usepackage{orcidlink}
\usepackage[backend=biber,style=alphabetic,maxnames=10,giveninits=true]{biblatex}
\definecolor{vegasgold}{rgb}{0.77, 0.7, 0.35}
\definecolor{darkgoldenrod}{rgb}{0.72, 0.53, 0.04}
\definecolor{gold(metallic)}{rgb}{0.83, 0.69, 0.22}

\hypersetup{
 colorlinks=true,
 linkcolor=darkgoldenrod,
 filecolor=brown,
 urlcolor=gold(metallic),
 citecolor=darkgoldenrod,
}

\usepackage{enumitem}

\numberwithin{equation}{section}

\newtheorem{theorem}{Theorem}[section]
\newtheorem{proposition}[theorem]{Proposition}
\newtheorem{lemma}[theorem]{Lemma}
\newtheorem{corollary}[theorem]{Corollary}

\theoremstyle{definition}

\newcommand{\Q}{\mathbb Q}
\newcommand{\Z}{\mathbb Z}

\newcommand{\Fp}{\mathbb{F}_p}

\newcommand{\Zp}{\mathbb Z_p}

\newcommand{\LambdaIw}{\Lambda}
\newcommand{\Gal}{\operatorname{Gal}}
\newcommand{\Cl}{\operatorname{Cl}}
\newcommand{\rk}{\operatorname{rank}}

\newcommand{\Iw}{\mathrm{Iw}}

\newcommand{\ab}{\mathrm{ab}}
\newcommand{\cF}{\mathcal F}

\title[Iwasawa theory of class field towers]{Iwasawa theory of class field towers}
\author[A.~Ray]{Anwesh Ray\, \orcidlink{0000-0001-6946-1559}}
\address{Chennai Mathematical Institute, H1, SIPCOT IT Park, Siruseri, Kelambakkam 603103, Tamil Nadu, India.}
\email{anwesh@cmi.ac.in}

\subjclass[2020]{11R23, 11R29, 20E18}
\keywords{Iwasawa theory, class field towers, pro-$p$ groups, Golod--Shafarevich inequality, Zassenhaus filtration, augmentation filtration}

\begin{document}

\begin{abstract}
Let $p$ be an odd prime and let $K_\infty/K$ be a $\Zp$-extension.
For each finite layer $K_n$, let $G_n$ be the Galois group of its
maximal unramified pro-$p$ extension.  We study the augmentation and
Zassenhaus filtrations of the groups $G_n$ as $n$ tends to infinity.

Assume that the classical Iwasawa $\mu$-invariant is positive.  If
$d_n$ denotes the minimal number of generators of $G_n$, then there is
an integer $\delta>0$ such that $d_n=\delta p^n+O(1)$, while the
minimal number of relations is $O(p^n)$.  For each fixed $m\geq2$, as
$n\to\infty$, we show that the $m$-th augmentation quotient has the
same leading asymptotic as that of a free pro-$p$ group on $d_n$
generators.  We also obtain corresponding asymptotics for the Hilbert
series, the exponential growth rate of the augmentation filtration,
and the Zassenhaus quotients.
\end{abstract}

\maketitle

\section{Introduction}

\subsection{Background and motivation}

Let $K$ be a number field and let $p$ be an odd prime.  Denote by
$H_p(K)$ the maximal abelian unramified $p$-extension of $K$, and put
\[
 A_p(K)=\Gal(H_p(K)/K).
\]
By class field theory, $A_p(K)$ is naturally identified with the
$p$-primary part of the ideal class group of $K$.  If
$h_p(K)=|A_p(K)|$, then $h_p(K)$ is the $p$-part of the class number
of $K$.

Suppose now that $K_\infty/K$ is a $\Zp$-extension, and put
$\Gamma=\Gal(K_\infty/K)$ and $\Lambda=\Zp[[\Gamma]]$.  Let $K_n$ be
the unique intermediate field such that $[K_n:K]=p^n$.  Thus we have
a tower
\[
 K=K_0\subset K_1\subset\cdots\subset K_n\subset K_{n+1}
 \subset\cdots.
\]
If $p^{e_n}=h_p(K_n)$, then Iwasawa's classical theorem asserts that
there are integers $\mu,\lambda\geq0$ and $\nu_{\Iw}\in\Z$ such that
\begin{equation}\label{eq:intro-iwasawa}
 e_n=\mu p^n+\lambda n+\nu_{\Iw}
\end{equation}
for all sufficiently large $n$; see
\cite{Iwasawa1959,Washington1997}.  Thus the $p$-primary class groups
satisfy a precise asymptotic growth law as one moves up the
$\Zp$-extension.

One of the central themes of Iwasawa theory is that the finite-layer
growth in \eqref{eq:intro-iwasawa} is controlled by a single object at
the infinite layer.  If $A_n$ denotes the $p$-primary part of
$\Cl(K_n)$, then
\[
 X=\varprojlim_n A_n
\]
is a finitely generated torsion module over the Iwasawa algebra $\Lambda$.
The invariants $\mu$ and $\lambda$ can be interpreted in terms of the structure of $X$ as a $\Lambda$-module. 

 In this article, we extend this perspective to $p$-class field towers, which are noncommutative in general. Let
$\cF_n$ denote the maximal unramified pro-$p$ extension of $K_n$ and
set
\begin{equation}\label{eq:intro-Gn}
 G_n=\Gal(\cF_n/K_n).
\end{equation}
Equivalently, starting with $H_p^{(0)}(K_n)=K_n$ and
$H_p^{(1)}(K_n)=H_p(K_n)$, one may define inductively
\[
 H_p^{(j)}(K_n)
 =
 H_p\bigl(H_p^{(j-1)}(K_n)\bigr)
\]
for $j\geq2$. The union of these fields is $\cF_n$. Thus $G_n$ is the Galois group
of the full $p$-class field tower of $K_n$.  Class field theory gives
$G_n^{\ab}\simeq A_n$, so Iwasawa's theorem determines the growth of
the abelianization of $G_n$.  It is natural to ask whether there are
corresponding growth laws for the non-abelian groups $G_n$ themselves.

\begin{figure}[t]
\centering
\begin{tikzpicture}[
  scale=0.85,
  transform shape,
  every node/.style={inner sep=1.5pt}
]


\node (K0)   at (0,0)     {$K$};
\node (K1)   at (0,1.5)   {$K_1$};
\node (Kd)   at (0,3.0)   {$\vdots$};
\node (Kn)   at (0,4.5)   {$K_n$};
\node (Knp1) at (0,6.0)   {$K_{n+1}$};
\node (Kup)  at (0,7.5)   {$\vdots$};
\node (Kinf) at (0,9.0)   {$K_\infty$};

\draw (K0)--(K1)--(Kd)--(Kn)--(Knp1)--(Kup)--(Kinf);


\node (H00)   at (2.2,1.27)  {$H_p(K)$};
\node (H01)   at (2.2,2.77)  {$H_p(K_1)$};
\node (H0d)   at (2.2,4.27)  {$\vdots$};
\node (H0n)   at (2.2,5.77)  {$H_p(K_n)$};
\node (H0np1) at (2.2,7.27)  {$H_p(K_{n+1})$};
\node (H0up)  at (2.2,8.77)  {$\vdots$};

\draw (K0)--(H00);
\draw (K1)--(H01);
\draw (Kd)--(H0d);
\draw (Kn)--(H0n);
\draw (Knp1)--(H0np1);
\draw (Kup)--(H0up);

\draw (H00)--(H01)--(H0d)--(H0n)--(H0np1)--(H0up);


\node (H20)   at (4.4,2.54)  {$H_p^{(2)}(K)$};
\node (H21)   at (4.4,4.04)  {$H_p^{(2)}(K_1)$};
\node (H2d)   at (4.4,5.54)  {$\vdots$};
\node (H2n)   at (4.4,7.04)  {$H_p^{(2)}(K_n)$};
\node (H2np1) at (4.4,8.54)  {$H_p^{(2)}(K_{n+1})$};
\node (H2up)  at (4.4,10.04) {$\vdots$};

\draw (H00)--(H20);
\draw (H01)--(H21);
\draw (H0d)--(H2d);
\draw (H0n)--(H2n);
\draw (H0np1)--(H2np1);
\draw (H0up)--(H2up);

\draw (H20)--(H21)--(H2d)--(H2n)--(H2np1)--(H2up);


\node (D0)   at (6.6,3.81)  {$\reflectbox{$\ddots$}$};
\node (D1)   at (6.6,5.31)  {$\reflectbox{$\ddots$}$};
\node (Dd)   at (6.6,6.81)  {$\reflectbox{$\ddots$}$};
\node (Dn)   at (6.6,8.31)  {$\reflectbox{$\ddots$}$};
\node (Dnp1) at (6.6,9.81)  {$\reflectbox{$\ddots$}$};
\node (Dup)  at (6.6,11.31) {$\reflectbox{$\ddots$}$};

\draw (H20)--(D0);
\draw (H21)--(D1);
\draw (H2d)--(Dd);
\draw (H2n)--(Dn);
\draw (H2np1)--(Dnp1);
\draw (H2up)--(Dup);


\node (J0)   at (8.8,5.08)  {$H_p^{(j)}(K)$};
\node (J1)   at (8.8,6.58)  {$H_p^{(j)}(K_1)$};
\node (Jd)   at (8.8,8.08)  {$\vdots$};
\node (Jn)   at (8.8,9.58)  {$H_p^{(j)}(K_n)$};
\node (Jnp1) at (8.8,11.08) {$H_p^{(j)}(K_{n+1})$};
\node (Jup)  at (8.8,12.58) {$\vdots$};

\draw (D0)--(J0);
\draw (D1)--(J1);
\draw (Dd)--(Jd);
\draw (Dn)--(Jn);
\draw (Dnp1)--(Jnp1);
\draw (Dup)--(Jup);

\draw (J0)--(J1)--(Jd)--(Jn)--(Jnp1)--(Jup);


\node (E0)   at (11.0,6.35)  {$\reflectbox{$\ddots$}$};
\node (E1)   at (11.0,7.85)  {$\reflectbox{$\ddots$}$};
\node (Ed)   at (11.0,9.35)  {$\reflectbox{$\ddots$}$};
\node (En)   at (11.0,10.85) {$\reflectbox{$\ddots$}$};
\node (Enp1) at (11.0,12.35) {$\reflectbox{$\ddots$}$};
\node (Eup)  at (11.0,13.85) {$\reflectbox{$\ddots$}$};

\draw (J0)--(E0);
\draw (J1)--(E1);
\draw (Jd)--(Ed);
\draw (Jn)--(En);
\draw (Jnp1)--(Enp1);
\draw (Jup)--(Eup);


\node (F0)   at (13.2,7.62)  {$\cF_0$};
\node (F1)   at (13.2,9.12)  {$\cF_1$};
\node (Fd)   at (13.2,10.62) {$\vdots$};
\node (Fn)   at (13.2,12.12) {$\cF_n$};
\node (Fnp1) at (13.2,13.62) {$\cF_{n+1}$};
\node (Fup)  at (13.2,15.12) {$\vdots$};

\draw (E0)--(F0);
\draw (E1)--(F1);
\draw (Ed)--(Fd);
\draw (En)--(Fn);
\draw (Enp1)--(Fnp1);
\draw (Eup)--(Fup);

\draw (F0)--(F1)--(Fd)--(Fn)--(Fnp1)--(Fup);

\end{tikzpicture}
\caption{The $\Zp$-extension $K_\infty/K$ together with the
$p$-class field towers of its finite layers.}
\label{fig:class-field-towers}
\end{figure}

This question was considered from a related point of view in
\cite{BhattacharyyaKadiriRay2024}.  There we studied finitely
ramified pro-$p$ Galois groups as the base field varies through a
$\Zp$-extension and obtained asymptotic lower bounds for their
exponential growth.  The groups considered there may be viewed as
non-abelian analogues of ray class groups.  The present paper returns
to the unramified groups $G_n$ and studies their growth more
precisely.

In studying asymptotic growth properties, we follow the philosophy of
Golod--Shafarevich theory. We consider natural filtrations of $G_n$ and
various generating series associated to them.  This point of view has a
long history in the study of class field towers.  The Golod--Shafarevich inequality relates the number and depth of the defining relations of a pro-$p$ group to the sizes of the successive quotients of the augmentation filtration in its completed group algebra; see \cite{GolodShafarevich1964,Ershov2012}. The key input is Shafarevich's bound on the number of relations needed to present the groups $G_n$ in terms of their number of generators; see Proposition~\ref{prop:shafarevich} and \eqref{eq:rn-explicit}. More refined
questions concerning relation depth and the Zassenhaus filtration
have been studied, for instance, in
\cite{HajirMaire2018,HajirMaire2019,HajirMaireRamakrishna2025}.
\subsection{Main results}
We now describe the invariants considered in this paper.  Let $I_n$
be the augmentation ideal of the completed group algebra
$\Fp[[G_n]]$, and put
\[
 c_{m,n}
 =
 \dim_{\Fp}I_n^m/I_n^{m+1}.
\]
The associated Hilbert series is
\begin{equation}\label{eq:intro-Hn}
 H_n(t)=\sum_{m\geq0}c_{m,n}t^m.
\end{equation}
If $F_d$ is a free pro-$p$ group on $d$ generators,
then the Magnus isomorphism identifies its completed group algebra
with a noncommutative formal power-series algebra on $d$ variables and consequently \[H_{F_d}(t)=\frac{1}{1-dt}.
\]

The Hilbert series also gives a single invariant which measures the
overall exponential growth of the augmentation quotients.  Following
\cite{BhattacharyyaKadiriRay2024}, define
\begin{equation}\label{eq:intro-rho-def}
 \rho(G)
 =
 \limsup_{m\to\infty}
 \left(
 \dim_{\Fp}I_G^m/I_G^{m+1}
 \right)^{1/m}.
\end{equation}
The radius of convergence of the corresponding Hilbert series is
$\rho(G)^{-1}$. The invariant $\rho(G_n)$ measures the
exponential growth of all augmentation degrees simultaneously.

There is a second filtration which is naturally tied to the same
generating series.  Let $D_m(G_n)$ denote the Zassenhaus filtration
and put
\[
 a_{m,n}
 =
 \dim_{\Fp}D_m(G_n)/D_{m+1}(G_n).
\]
The successive quotients form a graded restricted Lie algebra over
$\Fp$. The
augmentation and Zassenhaus filtrations give two different ways of
giving two associated graded invariants of the groups $G_n$.

The results in this paper are of notable interest when the
$\mu$-invariant is positive. Such examples are known for
non-cyclotomic $\Zp$-extensions.  In particular, Iwasawa constructed
families of non-cyclotomic $\Zp$-extensions with positive
$\mu$-invariant; see \cite{IwasawaMu1973,Iwasawa1973}. Leveraging this construction, Hubbard and Washington gave explicit calculations of positive
$\mu$-invariants for several non-cyclotomic $\Zp$-extensions; see \cite{HubbardWashington2018}.  By contrast,
for the cyclotomic $\Zp$-extension of a number field it is conjectured
that the $\mu$-invariant always vanishes.  This was proved by
Ferrero and Washington when the ground field is abelian over $\Q$;
see \cite{FerreroWashington1979}.  Thus the positive-$\mu$ setting
considered here is naturally associated with genuinely
non-cyclotomic phenomena.
  In this case the number of
generators of $G_n$ itself grows exponentially with $n$.  Write $X[p^\infty]$, up to
pseudo-isomorphism, as a direct sum of cyclic $\Lambda$-modules
\begin{equation}\label{eq:intro-elementary}
X[p^\infty]\sim \bigoplus_{j=1}^{\delta}\LambdaIw/(p^{e_j}),
\end{equation}
where $e_j\geq1$.
Then the $\mu$-invariant is given by $\mu=\sum_{j=1}^{\delta}e_j$ and $\delta=\rk_{\Fp[[\Gamma]]}X[p]$.
In particular, $\mu>0$ if and only if $\delta>0$.  

The relevance of the positive $\mu$ assumption to the groups $G_n$
is that it forces their minimal number of generators $d(G_n)$ to grow with the
level. In
Section~\ref{sec:profile}, it is shown that
\begin{equation}\label{eq:intro-generator-growth}
d_n:=d(G_n)=\delta p^n+O(1).
\end{equation}
Thus, when $\mu>0$, one has $\delta>0$ and hence $d_n\asymp p^n$.
At the same time, Shafarevich's estimate bounds the relation rank
$r_n=r(G_n)$ by
\begin{equation}\label{eq:intro-relation-growth}
r_n=O(p^n)=O(d_n).
\end{equation}
Consequently, the number of defining relations grows only linearly in
$d_n$. 
Our first result concerns the individual coefficients of the Hilbert
series.  It shows that, for each fixed $m$, as $n\to\infty$, the $m$-th augmentation
quotient has the same leading term as that of a free pro-$p$ group
with $d_n$ generators.

\begin{theorem}[Theorem~\ref{thm:augmentation}]\label{thm:intro-augmentation}
Assume that $\mu(K_\infty/K)>0$.  Then, for every fixed $m\geq2$,
\begin{equation}\label{eq:intro-augmentation}
 c_{m,n}
 =
 d_n^m+O_m(r_nd_n^{m-2})
 =
 \delta^mp^{mn}+O_m(p^{(m-1)n}).
\end{equation}
For $m=1$, one has $c_{1,n}=d_n=\delta p^n+O(1)$.
\end{theorem}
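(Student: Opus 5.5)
The plan is to compare $\Fp[[G_n]]$ with the completed group algebra of a free pro-$p$ group via a minimal presentation, and to bound the kernel degree by degree. Fix a minimal presentation $1\to R_n\to F_{d_n}\to G_n\to 1$, where $F_{d_n}$ is free pro-$p$ on $d_n$ generators and $R_n$ is generated as a closed normal subgroup by $r_n$ relations $\rho_1,\dots,\rho_{r_n}$. Since the presentation is minimal, $\rho_i\in\Phi(F_{d_n})$. Under the Magnus isomorphism $\Fp[[F_{d_n}]]\simeq\Fp\langle\langle X_1,\dots,X_{d_n}\rangle\rangle$, each element $\rho_i-1$ lies in the square $I_F^2$ of the augmentation ideal. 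The surjection $\Fp[[F_{d_n}]]\to\Fp[[G_n]]$ has kernel equal to the closed two-sided ideal $J_n$ generated by the elements $\rho_i-1$, and it carries $I_F^m$ onto $I_n^m$. Therefore
\[
 c_{m,n}=\dim I_F^m/I_F^{m+1}-\dim\bigl((J_n\cap I_F^m)+I_F^{m+1}\bigr)/I_F^{m+1}
 = d_n^m-\epsilon_{m,n},
\]
where $0\le\epsilon_{m,n}$.

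The main step is an upper bound on $\epsilon_{m,n}$. Modulo $I_F^{m+1}$, the ideal $J_n\cap I_F^m$ is spanned by the elements $u(\rho_i-1)v$. Here $\rho_i-1\in I_F^{k}$ with $k\ge2$, and $u\in I_F^{a}$, $v\in I_F^{b}$ with $a+b+k\ge m$. One point needs care: an element of $J_n\cap I_F^m$ could be a combination of terms with lower total degree whose leading forms cancel. I would handle this by passing to the associated graded. The graded ideal $\operatorname{gr}J_n$ contains the ideal generated by the initial forms of the $\rho_i-1$, but it need not equal it. So instead of a direct count, I would invoke the Golod--Shafarevich machinery (cf.\ \cite{Ershov2012}). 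That result gives the coefficientwise inequality
\[
 H_n(t)\,(1-d_nt+r_nt^2)\ge 1 \qquad\text{up to the first negative coefficient},
\]
or, in its sharper recursive form, $c_{m,n}\ge d_nc_{m-1,n}-r_nc_{m-2,n}$ for every $m$ such that all the preceding quantities are positive. This inequality holds because every relation has depth at least $2$.

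From here the estimates are routine. The trivial bound is $c_{m,n}\le d_n^m$, since $\operatorname{gr}\Fp[[G_n]]$ is generated in degree $1$ by $d_n$ elements. For the lower bound, induct on $m$, starting from $c_{0,n}=1$ and $c_{1,n}=d_n$, the latter because $I_n/I_n^2\simeq G_n/\Phi(G_n)$. Using $r_n=O(p^n)=O(d_n)$, which follows from Proposition~\ref{prop:shafarevich} and the generator estimate $d_n=\delta p^n+O(1)$, the recursion gives
\[
 c_{m,n}\ge d_n\bigl(d_n^{m-1}-O(r_nd_n^{m-3})\bigr)-r_nd_n^{m-2}=d_n^m-O_m(r_nd_n^{m-2}).
\]
The positivity hypothesis needed to apply the recursion is automatic for large $n$, because $r_n/d_n^2\to0$. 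The bound is stated only for $n\gg_m0$, which suffices for an asymptotic statement.

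Finally, substitute $d_n=\delta p^n+O(1)$ to get $d_n^m=\delta^mp^{mn}+O_m(p^{(m-1)n})$ and $r_nd_n^{m-2}=O(p^{(m-1)n})$. The main obstacle is justifying the degree-by-degree Golod--Shafarevich recursion. It is not merely the generating-function inequality, so I would take it from the standard proof: Anick's or Vinberg's resolution argument, which gives a truncated exact complex $\bigoplus^{r_n}A[-2]\to\bigoplus^{d_n}A[-1]\to A\to\Fp$ on the graded algebra $A=\operatorname{gr}\Fp[[G_n]]$. Comparing dimensions in degree $m$ in this complex yields exactly the recursive inequality.
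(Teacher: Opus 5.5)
Your lower bound rests entirely on the degree-wise recursion $c_{m,n}\ge d_nc_{m-1,n}-r_nc_{m-2,n}$, and the justification you give for it does not hold. Exactness of $\bigoplus^{r_n}A[-2]\to\bigoplus^{d_n}A[-1]\to A\to\Fp\to0$ is Vinberg's statement for a graded algebra presented by $d_n$ degree-one generators and $r_n$ homogeneous relations of degree $\ge2$. For $A=\operatorname{gr}\Fp[[G_n]]$ this would require $\operatorname{gr}J_n$ to be generated by the initial forms of the $\rho_i-1$. You yourself correctly said this may fail.

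In filtered terms, consider the exact sequence $\Fp[[G_n]]^{r_n}\xrightarrow{\psi}\Fp[[G_n]]^{d_n}\xrightarrow{\phi}I_n\to0$ coming from the presentation. The map $\phi$ is strict, so $\ker(\operatorname{gr}\phi)=\operatorname{gr}(\operatorname{im}\psi)$. However, $\operatorname{im}(\operatorname{gr}\psi)$ can be strictly smaller unless the Fox-derivative map $\psi$ is also strict, and nothing guarantees that. Truncating the filtered sequence, instead of passing to $\operatorname{gr}$, gives only the cumulative inequality
\[
 \dim_{\Fp}I_n/I_n^{m+1}\ \ge\ d_n\dim_{\Fp}\Fp[[G_n]]/I_n^{m}-r_n\dim_{\Fp}\Fp[[G_n]]/I_n^{m-1}.
\]
This is the form, with the extra factor $(1-t)^{-1}$, of the Golod--Shafarevich inequality for complete filtered algebras (cf.\ \cite{Ershov2012}). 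So the central step of your proposal is, as written, unproved.

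The gap can be closed in two ways.

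\emph{Staying on your route.} The cumulative inequality already suffices. Put $C_m=\sum_{j<m}c_{j,n}$ and assume $d_n\ge2$. The inequality then reads $c_{m,n}\ge1+(d_n-1)C_m-r_nC_{m-1}$. By induction, $C_m\ge (d_n^m-1)/(d_n-1)-O_m(r_nd_n^{m-3})$ and $C_{m-1}\le 2d_n^{m-2}$. Together these give $c_{m,n}\ge d_n^m-O_m(r_nd_n^{m-2})$.

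\emph{The paper's route.} This is simpler, and your reason for abandoning the direct count was unfounded. Take $x\in J_n\cap I_F^m$. Modulo $I_F^{m+1}$ it is a finite sum $\sum_k u_k(\rho_{i_k}-1)v_k$, and its class in $I_F^m/I_F^{m+1}$ is the degree-$m$ homogeneous component of that sum. That component lies in the span of the products $Mf_{i,q}M'$, where $M,M'$ are monomials with $\deg M+q+\deg M'=m$. Here $f_{i,q}$ ranges over \emph{all} homogeneous pieces of $\rho_i-1$ with $2\le q\le m$, not only the initial forms. Cancellation among leading forms is therefore harmless. Counting these products bounds the loss in degree $m$ by $r_nS_m(d_n)=O_m(r_nd_n^{m-2})$, which is Lemma~\ref{lem:fixed-degree}.

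The rest of your argument then matches the paper: the upper bound $d_n^m$, the estimate $r_n=O(d_n)$ from Shafarevich, and the substitution $d_n=\delta p^n+O(1)$.
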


We also have the following result for the generating series.

\begin{theorem}[Theorem~\ref{thm:hilbert-rate}]\label{thm:intro-hilbert}
Assume that $\mu(K_\infty/K)>0$.  For every $R<\delta^{-1}$,
\begin{equation}\label{eq:intro-hilbert-limit}
 \sup_{|z|\leq R}
 \left|
 H_n(z/p^n)-\frac{1}{1-\delta z}
 \right|
 \ll_R p^{-n}.
\end{equation}
\end{theorem}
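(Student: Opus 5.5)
The plan is to compare $H_n(z/p^n)$ with the Hilbert series $1/(1-d_nz/p^n)$ of a free pro-$p$ group on $d_n$ generators. I would control the difference coefficientwise, with bounds that are uniform in $m$ rather than for fixed $m$ as in Theorem~\ref{thm:intro-augmentation}. The two inputs are the free upper bound and the Golod--Shafarevich (Vinberg) lower bound.

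For the upper bound, choose a minimal presentation $1\to N\to F_{d_n}\to G_n\to 1$. Then $\Fp[[G_n]]$ is a quotient of $\Fp\langle\langle x_1,\dots,x_{d_n}\rangle\rangle$ via the Magnus map, and the augmentation filtration is compatible with this quotient. Hence
\[
 c_{m,n}\le d_n^m\qquad\text{for all } m .
\]
For the lower bound, note that a minimal presentation has relations in the Frattini subgroup, so each relation $\rho$ satisfies $\rho-1\in I^2$. The standard Golod--Shafarevich argument, which is also the one behind Theorem~\ref{thm:augmentation}, then gives
\[
 c_{m,n}\ \ge\ d_n\,c_{m-1,n}-r_n\,c_{m-2,n}\qquad (m\ge2).
\]
Put $e_{m,n}=d_n^m-c_{m,n}\ge0$, so that $e_{0,n}=e_{1,n}=0$. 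Combining the two bounds gives
\[
 e_{m,n}\le d_ne_{m-1,n}+r_nc_{m-2,n}\le d_ne_{m-1,n}+r_nd_n^{m-2}.
\]
By induction this yields the uniform bound
\[
 0\le d_n^m-c_{m,n}\le (m-1)\,r_n\,d_n^{m-2}\qquad\text{for all } m\ge 2 .
\]

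Next, fix $R<R'<\delta^{-1}$. By \eqref{eq:intro-generator-growth}, $d_n/p^n=\delta+O(p^{-n})$, so for $n\ge n_0(R)$ we have $d_n/p^n\le 1/R'$. For such $n$ and $|z|\le R$, both $H_n(z/p^n)$ and $1/(1-d_nz/p^n)$ converge absolutely, because $c_{m,n}\le d_n^m$. I split the difference as
\[
 H_n(z/p^n)-\frac1{1-\delta z}=\sum_{m\ge2}\frac{c_{m,n}-d_n^m}{p^{mn}}z^m+\Bigl(\frac1{1-d_nz/p^n}-\frac1{1-\delta z}\Bigr).
\]
The first sum is bounded in absolute value by
\[
 \frac{r_nR^2}{p^{2n}}\sum_{m\ge2}(m-1)\Bigl(\frac{d_n}{p^n}\Bigr)^{m-2}R^{m-2}\le \frac{r_nR^2}{p^{2n}}\sum_{k\ge0}(k+1)(R/R')^{k}\ll_R \frac{r_n}{p^{2n}}\ll p^{-n},
\]
using Shafarevich's bound $r_n=O(p^n)$ from \eqref{eq:intro-relation-growth}. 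The second term equals
\[
 \frac{z\,(d_n/p^n-\delta)}{(1-d_nz/p^n)(1-\delta z)} .
\]
Its denominators are bounded below by $(1-R/R')(1-\delta R)>0$ on $|z|\le R$, so it is $\ll_R p^{-n}$.

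The main point to get right is the uniform-in-$m$ lower bound: the fixed-$m$ error $O_m$ of Theorem~\ref{thm:intro-augmentation} is not summable by itself. The linear factor $(m-1)$ in the error is harmless because the geometric ratio $R/R'$ is strictly less than $1$. There is also a minor caveat: for the finitely many $n<n_0(R)$, the series $H_n(z/p^n)$ need not converge on all of $|z|\le R$. So the estimate \eqref{eq:intro-hilbert-limit} is to be read for $n$ sufficiently large (depending on $R$), which is the natural meaning of an asymptotic rate. Alternatively, one can use the exact lower bound through the Golod--Shafarevich polynomial to check that the series converges for those $n$ too, when that holds.
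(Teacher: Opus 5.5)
\textbf{Gap.} Your architecture matches the paper's. You compare $H_n(z/p^n)$ with $1/(1-d_nz/p^n)$ using a coefficient bound valid for all $m$, sum it against a geometric series using $r_n=O(p^n)$, and then compare the two free series. Your treatment of that last comparison, and of the finitely many small $n$, also matches the paper.

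The gap is in the step you yourself single out as the main point. The recursion $c_{m,n}\ge d_nc_{m-1,n}-r_nc_{m-2,n}$ is the degree-by-degree form of Golod--Shafarevich that one has for \emph{homogeneous} relations. Here the elements $\rho_{i,n}-1$ have Magnus expansions $f_{i,n,2}+f_{i,n,3}+\cdots$ with higher-order tails. For such filtered presentations the standard (Vinberg) argument gives less. Tensoring $\Fp[[G_n]]^{r_n}\to\Fp[[G_n]]^{d_n}\to I_n\to0$ with $\Fp[[G_n]]/I_n^{m-1}$ only yields
\[
 \dim I_n/I_n^m\ \ge\ d_n\dim\Fp[[G_n]]/I_n^{m-1}-r_n\dim\Fp[[G_n]]/I_n^{m-2}.
\]
This is an inequality for \emph{partial sums} of the $c_{j,n}$, i.e.\ the $(1-t)^{-1}$ form, and differencing does not preserve inequalities. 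Obtaining it degree by degree would require the relation map to be strict for the $I_n$-adic filtrations. That is not automatic: leading terms in $\sum_k u_k(\rho_{i_k,n}-1)v_k$ can cancel and leave initial forms in higher degree that the $f_{i,n,2}$ do not generate. Nor is this recursion what lies behind Theorem~\ref{thm:augmentation}, which rests on the direct count of Lemma~\ref{lem:fixed-degree}. So your bound $d_n^m-c_{m,n}\le(m-1)r_nd_n^{m-2}$ is unproved as written.

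\textbf{Repair.} The fix is easy, and it is what the paper does. Lemma~\ref{lem:fixed-degree} is not a fixed-$m$ statement: it gives $0\le d_n^m-c_{m,n}\le r_nS_m(d_n)$ for \emph{every} $m\ge2$. The reason is that every degree-$m$ initial form of the relation ideal, including those produced by cancellation, lies in the span of the products $u f_{i,n,q}v$. Since
\[
 S_m(d)=\sum_{j=0}^{m-2}(j+1)d^j\le\frac{d}{d-1}(m-1)d^{m-2},
\]
you get $d_n^m-c_{m,n}\le 2(m-1)r_nd_n^{m-2}$ once $d_n\ge2$. Your geometric-series estimate then goes through unchanged. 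The paper instead sums $\sum_{m\ge2}S_m(d_n)t^m=t^2/((1-t)(1-d_nt)^2)$ in closed form, as in Lemma~\ref{lem:hilbert-defect}.

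Alternatively, you can keep your inductive idea and run it on the partial-sum defects $E_m=\sum_{j<m}(d_n^j-c_{j,n})$. The displayed inequality gives $E_m\le d_nE_{m-1}+r_n\sum_{j<m-2}d_n^j$. Combined with $0\le d_n^m-c_{m,n}\le E_{m+1}$, this yields the same bound.
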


Thus, after rescaling the variable by $p^n$, the Hilbert series
converges to the free Hilbert series $(1-\delta z)^{-1}$. This can be interpreted as a uniform version of the asymptotic freeness in
Theorem~\ref{thm:intro-augmentation}.

We next consider the exponential growth number $\rho(G_n)$. In our earlier work
\cite{BhattacharyyaKadiriRay2024}, the main problem was to obtain
lower bounds for this invariant.  In the present unramified setting,
positive $\mu$ gives a much sharper asymptotic.

\begin{theorem}[Theorem~\ref{thm:spectral-strong} and Corollary~\ref{cor:eventual-towers}]\label{thm:intro-spectral}
Assume that $\mu(K_\infty/K)>0$.  Then
\begin{equation}\label{eq:intro-spectral}
 \rho(G_n)
 =
 d_n+O(1)
 =
 \delta p^n+O(1).
\end{equation}
More precisely, for all sufficiently large $n$,
\[
 0\leq d_n-\rho(G_n)\leq\frac{2r_n}{d_n}.
\]
In particular, $G_n$ is infinite for every sufficiently large $n$.
\end{theorem}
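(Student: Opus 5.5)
The plan has two halves: an upper bound $\rho(G_n)\le d_n$ from a surjection of group algebras, and a lower bound $\rho(G_n)\ge d_n-2r_n/d_n$ from the Golod--Shafarevich inequality. We use the growth facts already stated: $d_n=\delta p^n+O(1)$ from \eqref{eq:intro-generator-growth} and $r_n=O(p^n)$ from \eqref{eq:intro-relation-growth}. Fix a minimal presentation $1\to N\to F_{d_n}\to G_n\to 1$ whose relations $\rho_1,\dots,\rho_{r_n}$ lie in $\Phi(F_{d_n})=D_2(F_{d_n})$. Each relation then has Zassenhaus degree at least $2$.

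\textbf{Upper bound.} The surjection $\Fp[[F_{d_n}]]\to\Fp[[G_n]]$ carries the augmentation filtration onto the augmentation filtration. Hence
\[
c_{m,n}\le \dim_{\Fp} I_{F_{d_n}}^m/I_{F_{d_n}}^{m+1}=d_n^m,
\]
and therefore $\rho(G_n)\le d_n$.

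\textbf{Lower bound.} Vinberg's form of the Golod--Shafarevich inequality gives a coefficientwise inequality of power series:
\[
H_n(t)\,\bigl(1-d_nt+\textstyle\sum_i t^{\deg\rho_i}\bigr)\ \ge\ \frac{1}{1-t}
\]
(or the version with $\ge 1$ on the right). Since every relation has degree at least $2$ and $0<t<1$, we have $\sum_i t^{\deg \rho_i}\le r_nt^2$. So for real $t\in(0,1)$ with $P_n(t):=1-d_nt+r_nt^2>0$, the right side gives a positive lower bound after multiplying out. Now suppose $H_n(t)$ converged at some $t_0$ with $P_n(t_0)<0$. Because the coefficients of $H_n$ are nonnegative, the coefficientwise inequality would pass to the evaluation at $t_0$ and yield
\[
H_n(t_0)\,P_n(t_0)\ \ge\ \text{(positive quantity)},
\]
which is impossible since $H_n(t_0)>0$ and $P_n(t_0)<0$. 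Thus the radius of convergence $\rho(G_n)^{-1}$ is at most the smallest positive root $t_-$ of $P_n$, provided that root exists, i.e.\ $d_n^2>4r_n$. This holds for large $n$ because $d_n^2\asymp p^{2n}$ while $r_n=O(p^n)$. It follows that
\[
\rho(G_n)\ \ge\ \frac{1}{t_-}=\frac{d_n+\sqrt{d_n^2-4r_n}}{2}.
\]

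\textbf{The estimate.} Using $\sqrt{1-x}\ge 1-x$ for $0\le x\le1$,
\[
\frac{d_n+\sqrt{d_n^2-4r_n}}{2}\ \ge\ \frac{d_n+d_n\bigl(1-4r_n/d_n^2\bigr)}{2}=d_n-\frac{2r_n}{d_n}.
\]
Since $r_n/d_n=O(1)$, this gives $\rho(G_n)=d_n+O(1)=\delta p^n+O(1)$. Finally, $\rho(G_n)>0$ for large $n$ (since $d_n\to\infty$), so infinitely many $c_{m,n}$ are nonzero. Then $I_n^m\ne 0$ for all $m$, so $\Fp[[G_n]]$ is infinite-dimensional and $G_n$ is infinite. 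Equivalently, $d_n^2>4r_n$ already violates the Golod--Shafarevich criterion for finite groups.

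\textbf{Main obstacle.} The delicate step is the analytic passage from the coefficientwise inequality to a bound on the radius of convergence. One must use the version of Golod--Shafarevich valid for the whole augmentation filtration (Vinberg/Ershov), check that the minimal presentation's relations have degree at least $2$, and handle the evaluation argument at the root carefully. Positivity of the coefficients is what makes the comparison legitimate for real $t$ up to the radius of convergence.
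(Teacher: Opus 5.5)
Your proof is correct and follows the paper's route. The upper bound comes from the surjection $\Fp[[F_{d_n}]]\to\Fp[[G_n]]$. The lower bound $\rho(G_n)\ge\frac{d_n+\sqrt{d_n^2-4r_n}}{2}$ comes from Golod--Shafarevich with the quadratic majorant $1-d_nt+r_nt^2$: you unpack the divergence criterion that the paper quotes as Proposition~\ref{prop:GS-spectral-bound}, and you use $\sqrt{1-x}\ge 1-x$ where the paper rationalizes. Infiniteness follows because a finite $p$-group has a polynomial Hilbert series.

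One small correction. Put $Q(t)=1-d_nt+\sum_i t^{\deg\rho_i}$. The coefficientwise inequality available for the complete algebra $\Fp[[G_n]]$ is Vinberg's $\frac{H_n(t)Q(t)}{1-t}\ge\frac{1}{1-t}$. Your first form $H_nQ\ge\frac{1}{1-t}$ already fails in degree one, where the coefficient of $t$ on the left is $c_{1,n}-d_n=0$. Evaluating the correct form at $t_0\in(0,1)$ still gives $H_n(t_0)Q(t_0)\ge 1$, so the rest of your argument is unchanged.
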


The proof again uses only the relative sizes of the generator and
relation ranks.  If a finitely presented pro-$p$ group has $d$
generators and $r$ relations with $d^2>4r$, the quadratic
Golod--Shafarevich estimate gives
\begin{equation}\label{eq:intro-quadratic-rho}
 \frac{d+\sqrt{d^2-4r}}{2}
 \leq
 \rho(G)
 \leq d.
\end{equation}
Since $r_n=O(d_n)$ and $d_n\to\infty$, the difference between the
two bounds remains bounded.  As a consequence, the radius of
convergence of $H_n(t)$ is
\[
 \frac{1}{\delta p^n}+O(p^{-2n}).
\]

Finally, we relate the
augmentation estimates to obtain asymptotics for the Zassenhaus filtration.

\begin{theorem}[Theorem~\ref{thm:zassenhaus-free} and Proposition~\ref{prop:zass2}.]\label{thm:intro-zassenhaus}
Assume that $\mu(K_\infty/K)>0$.  For every fixed $m\geq2$,
\begin{equation}\label{eq:intro-zassenhaus-leading}
 a_{m,n}
 =
 \frac{\delta^m}{m}p^{mn}
 +O_m(p^{(m-1)n}).
\end{equation}
For $m=1$, one has $a_{1,n}=d_n$.  In degree two,
\begin{equation}\label{eq:intro-zassenhaus-two}
 \binom{d_n}{2}-r_n
 \leq
 a_{2,n}
 \leq
 \binom{d_n}{2}.
\end{equation}
\end{theorem}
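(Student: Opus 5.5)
The proof goes through Jennings' theorem, which links the Zassenhaus quotients to the augmentation quotients by a product formula. Let $L_n=\bigoplus_m D_m(G_n)/D_{m+1}(G_n)$ be the graded restricted Lie algebra. Jennings' theorem says that $\operatorname{gr}\Fp[[G_n]]$ is its restricted enveloping algebra, so
\[
 H_n(t)=\prod_{k\geq1}\left(\frac{1-t^{pk}}{1-t^k}\right)^{a_{k,n}}.
\]
Taking logarithms and comparing coefficients of $t^m$ gives a triangular system. In it, $c_{m,n}$ equals $a_{m,n}$ plus a polynomial in $a_{1,n},\dots,a_{m-1,n}$. The same holds for the free pro-$p$ group $F_{d_n}$: its Hilbert series is $1/(1-d_nt)$, and its Zassenhaus quotients $a^{F}_{m}(d_n)$ satisfy the restricted Witt formula. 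In particular $a^F_m(d)=\frac1m d^m+O_m(d^{m-1})$; for $m$ not divisible by $p$ this is the ordinary necklace count, and the restricted correction only adds terms of degree at most $m/p$.

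The first step is degree one. Here $a_{1,n}=\dim D_1/D_2=\dim G_n/G_n^p[G_n,G_n]=d_n$.

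The second step is degree two, done directly. From a minimal presentation $1\to R\to F_{d_n}\to G_n\to1$ with $r_n$ relations, the map $D_2(F)/D_3(F)\to D_2(G_n)/D_3(G_n)$ is surjective. Its kernel is the image of $R$, which lies in $D_2(F)$ because the presentation is minimal, and this image has dimension at most $r_n$. For $p$ odd, $\dim D_2(F)/D_3(F)=\binom{d_n}{2}$: it is spanned by commutators, and $p$-th powers land in $D_p\subseteq D_3$. This gives $\binom{d_n}{2}-r_n\le a_{2,n}\le\binom{d_n}{2}$, which is Proposition~\ref{prop:zass2}.

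The third step handles general $m$ by strong induction, using Theorem~\ref{thm:augmentation}, namely $c_{m,n}=d_n^m+O_m(r_nd_n^{m-2})$. The inductive hypothesis is
\[
 a_{k,n}=a^F_k(d_n)+O_k(r_nd_n^{k-2})\quad\text{for } 2\le k<m,
\]
with equality $a_{1,n}=a^F_1(d_n)$ in degree one. The key fact about the Jennings relation is that the coefficient of $t^m$ in the product is a polynomial in the $a_k$ with weighted degree $m$, where $a_k$ has weight $k$. Perturbing $a_k$ by $\varepsilon_k=O(r_nd_n^{k-2})$, with each other factor of size $O(d_n^{j})$ for $a_j$, therefore changes that coefficient by $O_m(r_nd_n^{m-2})$. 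Subtracting the free identity $c^F_m=d_n^m=a^F_m(d_n)+P_m(a^F_1,\dots,a^F_{m-1})$ from the one for $G_n$ gives $a_{m,n}-a^F_m(d_n)=O_m(r_nd_n^{m-2})$. Finally, $r_n=O(p^n)$ and $d_n=\delta p^n+O(1)$, so
\[
 \frac{d_n^m}{m}=\frac{\delta^m}{m}p^{mn}+O_m(p^{(m-1)n}),
\]
and the error terms are $O_m(p^{(m-1)n})$.

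The main obstacle is the bookkeeping in the perturbation step. Each mixed product of perturbed coefficients has to be checked to lose at least a factor $d_n^2/r_n\asymp p^n$ relative to $d_n^m$. This holds because every term has total weight $m$ and every perturbation loses weight $2$ while gaining one factor $r_n$. It is cleanest to phrase this through the logarithm $\log H_n(t)=\sum_k a_{k,n}\log\frac{1-t^{pk}}{1-t^k}$, which is linear in the $a_{k,n}$, together with the rescaling of Theorem~\ref{thm:hilbert-rate}.
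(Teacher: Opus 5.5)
Your proof is correct. For general $m$ it follows the paper's strategy: Jennings' identity, comparison with the free pro-$p$ group on $d_n$ generators, induction on $m$, and the observation that each extra perturbation costs a factor $r_n/d_n^2=O(p^{-n})$. The only difference there is where you put the nonlinearity.
\begin{itemize}
\item The paper takes logarithms first, so the $a_{j,n}$ enter linearly through the divisor recursion \eqref{eq:jennings-log-recursion}. All the nonlinearity then sits in Lemma~\ref{lem:log-H}, where the $c_{j,n}=d_n^j-\varepsilon_{j,n}$ are perturbed inside $\log(1+X)$.
\item You keep the unlogged triangular system $c_{m,n}=a_{m,n}+P_m(a_{1,n},\dots,a_{m-1,n})$ and perturb the $a_{k,n}$ inside $P_m$. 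Its weighted degree is at most $m$ rather than exactly $m$ (e.g.\ $c_2=a_2+\binom{a_1+1}{2}$), which is harmless for the estimate.
\end{itemize}

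Your degree-two argument is genuinely different. The paper gets $0\le d_n^2-c_{2,n}\le r_n$ from the Magnus-expansion count of Lemma~\ref{lem:fixed-degree}. It then uses the $m=2$ case of the Jennings recursion to get the exact identity $a_{2,n}=\binom{d_n}{2}-(d_n^2-c_{2,n})$. You instead work directly in the group with the surjection $D_2(F)/D_3(F)\to D_2(G_n)/D_3(G_n)$. Your route needs neither the Hilbert series nor Jennings' formula, and it identifies the defect as the rank of the span of the relators' images in $D_2(F)/D_3(F)$. The paper's route ties $a_{2,n}$ exactly to the augmentation quotient $c_{2,n}$.

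Two of your steps deserve a sentence each:
\begin{itemize}
\item The kernel has dimension at most $r_n$ because $[F,D_2(F)]\subseteq D_3(F)$ makes $D_2(F)/D_3(F)$ central in $F/D_3(F)$. So all conjugates of a relator have the same image, and the normal closure of the relators contributes only the span of $r_n$ vectors.
\item The lower bound needs the $[x_i,x_j]$, $i<j$, to be linearly independent in $D_2(F)/D_3(F)$, not merely to span it. This is Lemma~\ref{lem:free-restricted-lie-dimensions}(4), or follows from the Magnus expansion $X_iX_j-X_jX_i$.
\end{itemize}

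Your closing aside via Theorem~\ref{thm:hilbert-rate} also works for the leading term. Since $(1-\delta z)^{-1}$ does not vanish on $|z|\le R$, Cauchy's estimate applied to the logarithms gives $[t^m]\log H_n(t)=\delta^mp^{mn}/m+O_m(p^{(m-1)n})$. The lower-index terms in the recursion are $O(d_n^{m/2})$. This yields \eqref{eq:intro-zassenhaus-leading}, but not the sharper error $O_m(r_nd_n^{m-2})$ of Theorem~\ref{thm:zassenhaus-free}.
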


Although the groups $G_n$ need not themselves be free, the number of defining relations is too small, relative to the number of generators, to affect the leading term in any fixed augmentation or Zassenhaus degree. The results in this article give a non-abelian counterpart to the classical growth patterns of $p$-primary class groups in Iwasawa theory. We expect that the lower-order terms should contain finer arithmetic information. Quadratic relations are the first ones
which can affect the $p^{(m-1)n}$ term in degree $m$, while relations
of larger depth enter only at smaller scales. This is the broad philosophy in
\cite{HajirMaire2018,HajirMaire2019,HajirMaireRamakrishna2025}.

\section{Preliminaries}\label{sec:preliminaries}
In this section, we recall some preliminary notions and set up notation. Throughout, $p$ is an odd prime and $K_\infty/K$ is a $\Zp$-extension.
We write $\Gamma=\Gal(K_\infty/K)$ and denote by $K_n$ the unique
intermediate field such that $[K_n:K]=p^n$.  We choose a topological
generator $\gamma$ of $\Gamma$ and identify the Iwasawa algebra
$\LambdaIw=\Zp[[\Gamma]]$ with $\Zp[[T]]$ by sending $\gamma-1$ to
$T$.  After replacing $K$ by a finite layer, if necessary, we assume
that every prime of $K$ which ramifies in $K_\infty/K$ is totally
ramified.  This replacement does not affect any of the asymptotic
statements considered below.

For a number field $F$, let $\Cl(F)$ denote its ideal class group, and
let $A_n$ be the $p$-primary part of $\Cl(K_n)$.  We denote by
$\cF_n$ the maximal unramified pro-$p$ extension of $K_n$ and put
$G_n=\Gal(\cF_n/K_n)$.  Class field theory identifies the
abelianization of $G_n$ with $A_n$.  We set
\begin{equation}\label{eq:d-r-def}
 d_n=d(G_n):=\dim_{\Fp}H^1(G_n,\Fp)
 \quad\text{and}\quad
 r_n=r(G_n):=\dim_{\Fp}H^2(G_n,\Fp).
\end{equation}
If
\[
 1\longrightarrow R_n\longrightarrow F_n\longrightarrow G_n
 \longrightarrow 1
\]
is a minimal pro-$p$ presentation, then $d_n$ and $r_n$ are,
respectively, the minimal numbers of generators and relations of
$G_n$; see \cite[Propositions~3.9.1 and~3.9.5]{NSW2008}.  Since
$G_n^{\ab}\simeq A_n$, it follows that
\begin{equation}\label{eq:dn-prank}
 d_n=\dim_{\Fp}A_n/pA_n.
\end{equation}

We shall repeatedly use the following relation-rank estimate of
Shafarevich; see \cite[Theorems~11.5 and~11.8]{Koch2002} and
\cite[Theorem~8.7.11 and Corollary~3.9.5]{NSW2008}.

\begin{proposition}[Shafarevich]\label{prop:shafarevich}
Let $F$ be a number field with signature $(r_1(F),r_2(F))$, and let
$G_F^{\mathrm{nr},p}$ be the Galois group of its maximal unramified
pro-$p$ extension.  Then
\begin{equation}\label{eq:shafarevich}
 d(G_F^{\mathrm{nr},p})
 \leq r(G_F^{\mathrm{nr},p})
 \leq d(G_F^{\mathrm{nr},p})
      +r_1(F)+r_2(F)-1+\delta_p(F),
\end{equation}
where $\delta_p(F)=1$ if $F$ contains a primitive $p$-th root of
unity and $\delta_p(F)=0$ otherwise.
\end{proposition}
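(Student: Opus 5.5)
This proposition is quoted from \cite{Koch2002,NSW2008}. The plan is to prove the two inequalities separately, using the identifications $d=\dim_{\Fp}H^1(G,\Fp)$ and $r=\dim_{\Fp}H^2(G,\Fp)$ from \eqref{eq:d-r-def} with $G=G_F^{\mathrm{nr},p}$.

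\emph{Lower bound $d\leq r$.} Take a minimal presentation $1\to R\to\mathcal{F}\to G\to1$, where $\mathcal{F}$ is free pro-$p$ on $d$ generators. The five-term (Hochschild--Serre) sequence
\[
0\to H^1(G,\Fp)\to H^1(\mathcal{F},\Fp)\to H^1(R,\Fp)^{G}\to H^2(G,\Fp)\to 0
\]
has first map an isomorphism, by minimality. Hence
\[
r=\dim_{\Fp}H^1(R,\Fp)^G=\dim_{\Fp}R/R^p[R,\mathcal{F}].
\]
Since $G^{\ab}\simeq A_p(F)$ is finite, the image $L$ of $R$ in $\mathcal{F}^{\ab}\simeq\Zp^d$ has finite index. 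So $L\simeq\Zp^d$ and $L/pL$ has dimension $d$. The natural map $R/R^p[R,\mathcal{F}]\to L/pL$ is well defined, because $R^p$ and $[R,\mathcal{F}]$ land in $pL$ and $0$ respectively. It is also surjective, which gives $r\geq d$.

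\emph{Upper bound.} Introduce the Kummer-type group
\[
V_F=\{a\in F^\times:(a)=\mathfrak a^p \text{ for some ideal }\mathfrak a\}/F^{\times p}.
\]
It sits in the exact sequence
\[
0\to E_F/E_F^p\to V_F\to \Cl(F)[p]\to 0 .
\]
By Dirichlet's unit theorem, $\dim E_F/E_F^p=r_1+r_2-1+\delta_p(F)$, and $\dim\Cl(F)[p]=\dim \Cl(F)/p=d$. Therefore
\[
\dim V_F=d+r_1+r_2-1+\delta_p(F).
\]
It then suffices to construct an injection $H^2(G,\Fp)\hookrightarrow V_F^\vee$. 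This is the Shafarevich--Koch embedding, and I would follow \cite[Theorem~11.5]{Koch2002}. Inflation to the full Galois group $G_F$ identifies $H^2(G,\Fp)$ with classes that become trivial at the places where the corresponding extension problems are unramified. One then pairs a relation, viewed as an obstruction to lifting an unramified $\Z/p^2$- or Heisenberg-type extension, against elements of $V_F$ via local Hilbert symbols and global reciprocity. Finally one checks that the kernel of this pairing is trivial, using the vanishing of the relevant Tate--Shafarevich-type group: an element orthogonal to all of $V_F$ would produce an unramified solution of the embedding problem.

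The main obstacle is this last injectivity, that is, establishing $H^2(G,\Fp)\hookrightarrow V_F^\vee$. It requires Poitou--Tate duality, or Koch's direct class-field-theoretic argument, applied to the empty ramification set. There one must treat the $p$-th roots of unity carefully, which is exactly where the correction $\delta_p(F)$ enters. I would cite \cite[Theorem~8.7.11]{NSW2008} for this step rather than reprove it in full.
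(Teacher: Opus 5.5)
Your proposal is correct and follows the same route as the references the paper cites; the paper itself gives no argument beyond citing Koch and NSW. Your five-term-sequence proof of $d\leq r$ from the finiteness of $G^{\ab}$ and your count $\dim_{\Fp}V_F=d+r_1(F)+r_2(F)-1+\delta_p(F)$ are both right, and the one deep input, the injection $H^2(G,\Fp)\hookrightarrow V_F^\vee$ (Koch's embedding for the empty ramification set), is exactly what those sources supply. One bookkeeping correction to your sketch of that step: $\delta_p(F)$ has already entered through the torsion $\mu(F)$ in $E_F/E_F^p$, so the injection needs no further $\mu_p$-correction (in Koch's general relation bound the $\mu_p$-terms cancel when the ramification set is empty).
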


Since $K_n/K$ is Galois of odd degree $p^n$, no real place of $K$
can become complex in $K_n$.  Thus
$r_i(K_n)=p^nr_i(K)$ for $i=1,2$.  Moreover, the quantity
$\delta_p(K_n)$ is independent of $n$, since
$[\Q(\mu_p):\Q]=p-1$ is prime to $p$.  Applying
Proposition~\ref{prop:shafarevich} to $K_n$ therefore gives
\begin{equation}\label{eq:rn-explicit}
 r_n
 \leq d_n+(r_1(K)+r_2(K))p^n+O(1).
\end{equation}

We next recall the two filtrations of $G_n$ which will be used
throughout the paper.  Let $G$ be a finitely generated pro-$p$ group.
Its completed group algebra over $\Fp$ is the inverse limit
$\Fp[[G]]=\varprojlim_U\Fp[G/U]$, where $U$ ranges over the open
normal subgroups of $G$.  The augmentation map
$\varepsilon_G:\Fp[[G]]\rightarrow\Fp$ is the continuous
$\Fp$-algebra homomorphism determined by $\varepsilon_G(g)=1$ for
$g\in G$, and we denote its kernel by $I_G$.  Since $G$ is finitely
generated, the quotients $I_G^m/I_G^{m+1}$ are finite-dimensional
$\Fp$-vector spaces.  We write
\[
 c_m(G)=\dim_{\Fp}I_G^m/I_G^{m+1}
\]
and denote the corresponding Hilbert series by
$H_G(t)=\sum_{m\geq0}c_m(G)t^m$. We define
\begin{equation}\label{rho(G) defn}\rho(G):=\limsup_{m\to\infty}c_m(G)^{1/m},\end{equation} note that the radius of convergence of $H_G(t)$ is $\rho(G)^{-1}$.

The lower central series of $G$ is defined by $C_1(G)=G$ and
$C_{i+1}(G)=[G,C_i(G)]$, where the commutator subgroup is understood
topologically.  The Zassenhaus filtration combines this commutator
filtration with the $p$-power structure.  If $H$ is a closed subgroup
of $G$, let $H^{p^j}$ denote the closed subgroup generated by the
$p^j$-th powers of elements of $H$.  Then
\begin{equation}\label{eq:zass-def}
 D_m(G)
 =
 \prod_{ip^j\geq m} C_i(G)^{p^j},
\end{equation}
where the product denotes the closed subgroup generated by the
indicated factors.  We put
\begin{equation}\label{eq:am-def}
 a_m(G)=\dim_{\Fp}D_m(G)/D_{m+1}(G).
\end{equation}
The Jennings--Lazard theorem relates the augmentation filtration to
the Zassenhaus filtration and gives the identity
\begin{equation}\label{eq:jennings}
 H_G(t)
 =
 \prod_{j\geq1}
 \left(\frac{1-t^{pj}}{1-t^j}\right)^{a_j(G)}.
\end{equation}
See \cite{Jennings1941,Lazard1965} and
\cite[\S7.4]{Koch2002}; see also \cite{MinacRogelstadTan2016} for the
form of the identity used here.

The graded vector space
\[
 L(G)=\bigoplus_{m\geq1}D_m(G)/D_{m+1}(G)
\]
has a natural structure of a graded restricted Lie algebra over
$\Fp$: the bracket is induced by commutators in $G$, while the
restricted $p$-operation is induced by the map $x\mapsto x^p$.
Indeed, the inclusions $[D_i(G),D_j(G)]\subseteq D_{i+j}(G)$ and
$D_i(G)^p\subseteq D_{pi}(G)$ ensure that these operations are
well-defined on the associated graded object.

We shall only require this construction in the free case.  Let $F_d$
be the free pro-$p$ group on generators $x_1,\ldots,x_d$.  Lazard
proved that $L(F_d)$ is the free restricted Lie algebra over $\Fp$ on
the initial forms of these generators; see
\cite[Theorem~6.5]{Lazard1954}.  We write
\[
 w_m^{(p)}(d)
 =
 \dim_{\Fp}D_m(F_d)/D_{m+1}(F_d)
\]
for the dimension of its degree-$m$ component.  The following
elementary consequences of the restricted
Poincar\'e--Birkhoff--Witt theorem will be used below.

\begin{lemma}\label{lem:free-restricted-lie-dimensions}
With the notation above, the following assertions hold.
\begin{enumerate}
\item The dimensions $w_j^{(p)}(d)$ satisfy
\begin{equation}\label{eq:free-restricted-hilbert}
 \frac{1}{1-dt}
 =
 \prod_{j\geq1}
 \left(\frac{1-t^{pj}}{1-t^j}\right)^{w_j^{(p)}(d)}.
\end{equation}

\item For every fixed $m\geq1$,
\begin{equation}\label{eq:free-restricted-asymptotic}
 w_m^{(p)}(d)
 =
 \frac{d^m}{m}+O_m(d^{m-1})
\end{equation}
as $d\to\infty$.

\item If $m<p$, then
\begin{equation}\label{eq:free-restricted-witt}
 w_m^{(p)}(d)
 =
 \frac{1}{m}
 \sum_{e\mid m}\mu_{\mathrm{Mob}}(e)d^{m/e},
\end{equation}
where $\mu_{\mathrm{Mob}}$ denotes the M\"obius function.

\item Since $p$ is odd, one has
$w_2^{(p)}(d)=\binom{d}{2}$.
\end{enumerate}
\end{lemma}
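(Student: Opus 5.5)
(1) follows from the Jennings--Lazard identity \eqref{eq:jennings} applied to $G=F_d$, together with the Magnus computation $H_{F_d}(t)=(1-dt)^{-1}$ recalled in the introduction. Lazard's theorem identifies $L(F_d)$ with the free restricted Lie algebra, so the exponents in \eqref{eq:jennings} are exactly $a_j(F_d)=w_j^{(p)}(d)$. Alternatively, one can see this directly: the restricted enveloping algebra of the free restricted Lie algebra on $d$ generators is the free associative algebra $\Fp\langle X_1,\dots,X_d\rangle$, whose Hilbert series is $(1-dt)^{-1}$. By restricted PBW, a basis of it consists of ordered monomials in a homogeneous basis of the Lie algebra with exponents $<p$. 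A basis element of degree $j$ therefore contributes $1+t^j+\dots+t^{(p-1)j}=(1-t^{pj})/(1-t^j)$, which gives \eqref{eq:free-restricted-hilbert}.

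For (2), take logarithms in \eqref{eq:free-restricted-hilbert}. Using $-\log(1-x)=\sum_k x^k/k$, comparing the coefficients of $t^m$ gives
\[
 \frac{d^m}{m}=\sum_{jk=m}\frac{w_j^{(p)}(d)}{k}-\sum_{pjk=m}\frac{w_j^{(p)}(d)}{k}.
\]
Argue by induction on $m$ that $w_j^{(p)}(d)=O_j(d^j)$. Isolating the term $j=m$, $k=1$, every other term involves some $w_j^{(p)}(d)$ with $j\le m/2$, so it is $O_m(d^{m/2})$. Hence $w_m^{(p)}(d)=d^m/m+O_m(d^{m/2})$, and since $m/2\le m-1$ for $m\ge 2$ this gives \eqref{eq:free-restricted-asymptotic}. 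For $m=1$ one has exactly $w_1^{(p)}(d)=d$, so the statement holds there too. The only bookkeeping point is to check that the $p$-power terms also have $j\le m/p<m$, which is immediate.

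For (3), if $m<p$ then no $p$-term appears in degrees $\le m$, since $pjk\ge p>m$. The identity above then reduces, in every degree $n\le m$, to $d^n=\sum_{j\mid n} j\,w_j^{(p)}(d)$. This is the same recursion satisfied by the ordinary Witt numbers, so Möbius inversion gives \eqref{eq:free-restricted-witt}. For (4), since $p\ge 3>2$, part (3) with $m=2$ gives $w_2^{(p)}(d)=\tfrac12(d^2-d)=\binom d2$. There is no real obstacle in this argument. The main care needed is in (1), namely in justifying that the Jennings exponents coincide with the dimensions of the free restricted Lie algebra, and this is exactly what Lazard's theorem supplies.
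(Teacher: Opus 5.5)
Your proof is correct and follows the paper's argument essentially step for step. The steps match: restricted PBW for (1), the logarithmic recursion with induction for (2), M\"obius inversion for (3), and specialization to $m=2$ for (4). Your error term $O_m(d^{\lfloor m/2\rfloor})$ in (2) is slightly sharper than the paper's, and you note explicitly that the reduced recursion holds in every degree $n\le m$, which M\"obius inversion needs and the paper leaves implicit.

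One small correction concerns your first route to (1). Applying \eqref{eq:jennings} to $F_d$ together with $H_{F_d}(t)=(1-dt)^{-1}$ does not use Lazard's theorem at all, since $w_j^{(p)}(d)$ is by definition $a_j(F_d)$. Lazard's theorem is actually needed in the PBW route, which is the paper's, to identify $L(F_d)$ as the free restricted Lie algebra.
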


\begin{proof}
By the restricted Poincar\'e--Birkhoff--Witt theorem, after choosing
an ordered homogeneous basis of a restricted Lie algebra, its
restricted universal enveloping algebra has a basis consisting of
ordered monomials in which every basis element occurs with exponent
between $0$ and $p-1$; see
\cite[Chapter~V, \S7]{Jacobson1962} or \cite[\S12.1]{DDMS1999}.
Thus a homogeneous basis element of degree $j$ contributes the factor
$1+t^j+\cdots+t^{(p-1)j}=(1-t^{pj})/(1-t^j)$ to the Hilbert series.

For the free restricted Lie algebra $L(F_d)$, its restricted universal
enveloping algebra is the free associative algebra on the initial
forms of $x_1,\ldots,x_d$.  Its degree-$n$ component has dimension
$d^n$, and hence its Hilbert series is $(1-dt)^{-1}$.  Multiplying the
PBW contributions of the homogeneous basis elements therefore gives
\eqref{eq:free-restricted-hilbert}.

Taking formal logarithms of \eqref{eq:free-restricted-hilbert} and
comparing the coefficient of $t^m$ gives
\begin{equation}\label{eq:free-restricted-recursion}
 d^m
 =
 \sum_{j\mid m}j\,w_j^{(p)}(d)
 -
 p\sum_{pj\mid m}j\,w_j^{(p)}(d).
\end{equation}
We prove the second assertion by induction on $m$.  The assertion is
clear when $m=1$.  For $m>1$, isolate the term corresponding to $j=m$
in the first sum in \eqref{eq:free-restricted-recursion}.  Every other
index which occurs is strictly smaller than $m$, and the induction
hypothesis therefore shows that the remaining terms contribute
$O_m(d^{m-1})$.  It follows that
$m w_m^{(p)}(d)=d^m+O_m(d^{m-1})$, which proves (2).

Suppose now that $m<p$.  There is no positive integer $j$ for which
$pj$ divides $m$, so the second sum in
\eqref{eq:free-restricted-recursion} is empty.  We therefore have
$d^m=\sum_{j\mid m}j\,w_j^{(p)}(d)$.  M\"obius inversion gives
\[
 m\,w_m^{(p)}(d)
 =
 \sum_{e\mid m}\mu_{\mathrm{Mob}}(e)d^{m/e},
\]
which proves (3).  Finally, taking $m=2$ and using that $p$ is odd
gives $w_2^{(p)}(d)=(d^2-d)/2=\binom{d}{2}$.
\end{proof}

We briefly recall the Golod--Shafarevich theorem.  Let
\begin{equation}\label{eq:min-presentation}
 1\longrightarrow R\longrightarrow F\longrightarrow G\longrightarrow1
\end{equation}
be a minimal presentation, where $F$ is free pro-$p$ on $d$ generators and $R$ is normally generated by $r$ relators.  Minimality implies $R\subseteq D_2(F)$.  If $r_k$ relators have Zassenhaus depth $k$, then the Golod--Shafarevich polynomial is defined as follows:
\begin{equation}\label{eq:GS-poly}
 P_G(t)=1-dt+\sum_{k\ge2}r_kt^k.
\end{equation}
\begin{proposition}[Golod--Shafarevich]
\label{prop:GS-spectral-bound}
With respect to notation above, the following
assertions hold.
\begin{enumerate}
\item If there exists $t_0\in(0,1)$ such that $P_G(t_0)<0$, then the
Hilbert series $H_G(t)$ does not converge at $t=t_0$.  Consequently, $\rho(G)
 \ge t_0^{-1}$.

\item One has, for every $0<t<1$,
\begin{equation}\label{eq:quadratic-majorant}
 P_G(t)
 \le
 1-dt+rt^2.
\end{equation}
\end{enumerate}
\end{proposition}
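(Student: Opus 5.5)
For part (1) I would take the standard Golod--Shafarevich route through the associated graded algebra. Write $\Fp[[F]]\simeq\Fp\langle\langle X_1,\dots,X_d\rangle\rangle$ via the Magnus map. Let $\mathfrak a$ be the kernel of $\Fp[[F]]\to\Fp[[G]]$, which is the closed two-sided ideal generated by $\rho_i-1$ for the relators $\rho_i$. A relator of Zassenhaus depth $k$ satisfies $\rho_i-1\in I_F^k$, so its initial form has degree at least $k$. The aim is the coefficientwise inequality
\[
H_G(t)\,P_G(t)\ \ge\ \frac{1}{1-t}\cdot(\text{correction})\quad\text{or, in the usual form,}\quad H_G(t)\bigl(1-dt+\textstyle\sum_k r_kt^k\bigr)\ \ge\ 1
\]
coefficientwise. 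I would obtain it from Vinberg's lemma on the exact sequence $\bigoplus_i \mathrm{gr}A\otimes\mathrm{gr}A\,(-k_i)\to \mathrm{gr}A\otimes V\to \mathrm{gr}A\to\Fp\to0$, with $A=\Fp[[G]]$. This yields $c_m\ge d\,c_{m-1}-\sum_k r_k c_{m-k}$ for every $m\ge1$, with $c_0=1$.

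The contradiction then comes as follows. Suppose $H_G$ converges at $t_0$; all coefficients are nonnegative and $t_0>0$. Multiplying the inequality by $t_0^m$ and summing, which is legitimate by absolute convergence, gives $H_G(t_0)P_G(t_0)\ge 1>0$. Since $H_G(t_0)\ge1>0$, this forces $P_G(t_0)>0$, contradicting $P_G(t_0)<0$. Divergence at $t_0$ means the radius of convergence satisfies $\rho(G)^{-1}\le t_0$, that is, $\rho(G)\ge t_0^{-1}$.

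Part (2) is elementary. Since $t\in(0,1)$ and $k\ge2$, we have $t^k\le t^2$, so $\sum_k r_kt^k\le(\sum_k r_k)t^2=rt^2$. Here I use that minimality places every relator in $D_2(F)$, so each relator is counted at some depth $k\ge2$, and $\sum_k r_k=r$.

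The main obstacle is the graded inequality $c_m\ge d c_{m-1}-\sum r_kc_{m-k}$. I would cite the standard Golod--Shafarevich--Vinberg argument, as in Koch \S7 or Ershov's survey, rather than reprove it. The one point to check is that depth is measured with respect to the Zassenhaus filtration, which coincides with $D_k(F)=\{g: g-1\in I_F^k\}$ by Jennings--Lazard. This is exactly what makes the initial-form degree of each $\rho_i-1$ equal to the depth $k$.
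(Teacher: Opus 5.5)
Your route is the paper's. Part (1) is the Golod--Shafarevich inequality followed by the radius-of-convergence remark, and part (2) is the same estimate $t^k\le t^2$. Your summation at $t_0$ makes explicit what the paper leaves to the citation.

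\textbf{The gap.} What does not go through as written is your derivation of the degree-by-degree inequality $c_m\ge d\,c_{m-1}-\sum_k r_kc_{m-k}$ by applying Vinberg's lemma to an exact graded sequence. (To produce that count at all, the first term would have to be $\bigoplus_i\mathrm{gr}A(-k_i)$, not $\mathrm{gr}A\otimes\mathrm{gr}A(-k_i)$.)

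For $A=\Fp[[G]]$ the exact sequence you actually have is the \emph{filtered} partial resolution $\bigoplus_i A(-k_i)\xrightarrow{\partial_2}A\otimes V(-1)\xrightarrow{\partial_1}A\to\Fp\to0$. The map $\partial_1$ is strict, but $\partial_2$ in general is not. So the associated graded sequence need not be exact in the middle: $\ker\mathrm{gr}\,\partial_1$ also contains leading forms of elements $\partial_2(z)$ whose filtration jumps above that of $z$.

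A concrete instance: in $\Fp\langle\!\langle X,Y\rangle\!\rangle/(X^2+Y^3)$ one has $\partial_2(e')=\bar Xe_1+\bar Y^2e_2$. Then $\partial_2(\bar Xe')=-\bar Y^3e_1+\bar X\bar Y^2e_2$, whose class in degree $4$ is a nonzero element of $\ker\mathrm{gr}_4\partial_1$ with nonzero $e_2$-component. By contrast, $\mathrm{gr}_4\partial_2$ only produces elements $a\bar Xe_1$. Equivalently, $\mathrm{gr}\,\Fp[[G]]$ is the quotient by the full ideal of initial forms, which may need more generators than the $r$ initial forms of the relators. So the graded Golod--Shafarevich inequality cannot simply be applied to it with $P_G$.

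\textbf{The repair.} The filtered argument does give a cumulative inequality. Put $s_m=\dim_{\Fp}\Fp[[G]]/I_G^{m+1}$. Strictness of $\partial_1$, together with the fact that $\partial_2$ preserves the shifted filtrations, yields $s_m\ge 1+d\,s_{m-1}-\sum_k r_ks_{m-k}$ for all $m$. In generating-function form this says $H_G(t)P_G(t)/(1-t)\ge 1/(1-t)$ coefficientwise, which is the form usually stated for pro-$p$ groups.

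Your evaluation argument goes through unchanged with this version. If $H_G(t_0)<\infty$, then $\sum_m s_mt_0^m=H_G(t_0)/(1-t_0)<\infty$. Summing the inequalities against $t_0^m$ gives $H_G(t_0)P_G(t_0)\ge 1$, contradicting $P_G(t_0)<0$. So the gap is easily repaired, but the per-degree inequality you state is not what Vinberg's lemma provides in the complete filtered setting.
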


\begin{proof}
The first assertion is the Golod--Shafarevich inequality.  If
$P_G(t_0)<0$ for some $t_0\in(0,1)$, then the inequality forces the
Hilbert series $H_G(t)$ to diverge at $t=t_0$; see
\cite{GolodShafarevich1964,Ershov2012}.  The radius of convergence of $H_G(t)$ is $\rho(G)^{-1}$.
Since $H_G(t)$ does not converge at $t_0$, its radius of convergence
is at most $t_0$, and therefore $\rho(G)\ge t_0^{-1}$.

For the second assertion, minimality of the presentation implies that
no defining relation has Zassenhaus depth one, so $r_k=0$ for $k<2$. Hence
\[
 \sum_{k\ge2}r_kt^k
 \le
 t^2\sum_{k\ge2}r_k
 =
 rt^2,
\]
which gives \eqref{eq:quadratic-majorant}.
\end{proof}

\section{The Iwasawa module and the generator rank}
\label{sec:profile}

Recall that $X=\varprojlim_n A_n$, where the transition maps are the norm maps. This is the classical
unramified Iwasawa module associated with $K_\infty/K$.  The action of
$\Gamma=\Gal(K_\infty/K)$ makes $X$ a finitely generated torsion module
over $\LambdaIw=\Zp[[\Gamma]]$.  With the choice of a topological
generator $\gamma$ made in Section~\ref{sec:preliminaries}, we identify
$\LambdaIw$ with $\Zp[[T]]$ by $\gamma-1\mapsto T$.

We recall the structure theorem for finitely generated torsion modules
over the Iwasawa algebra; see
\cite[Theorem~13.12]{Washington1997}.  Up to pseudo-isomorphism,
\begin{equation}\label{eq:X-elementary-decomposition}
 X
 \sim
 \bigoplus_{j=1}^{\delta}\LambdaIw/(p^{e_j})
 \oplus
 \bigoplus_{\ell=1}^{t}
 \LambdaIw/(f_\ell(T)^{a_\ell}),
\end{equation}
where $e_j\geq1$, $a_\ell\geq1$, and each $f_\ell(T)$ is an
irreducible distinguished polynomial. We have that
\[
 \mu=\sum_{j=1}^{\delta}e_j
 \quad\text{and}\quad
 \delta=\rk_{\Fp[[\Gamma]]}X[p].
\]
In particular, $\mu>0$ if and only if $\delta>0$.

For the arguments below, we only need to determine the growth of the
$p$-rank of $A_n$.  Recall from \eqref{eq:dn-prank} that
\[
 d_n=d(G_n)=\dim_{\Fp}A_n/pA_n.
\]
The following consequence of the structure theorem gives the required
estimate.

\begin{proposition}
\label{prop:finite-layer-profile}
With the notation above, one has
\begin{equation}\label{eq:dn-exact}
 d_n=\delta p^n+O(1).
\end{equation}
In particular, $\mu>0$ if and only if $d_n$ grows linearly with
$[K_n:K]=p^n$.
\end{proposition}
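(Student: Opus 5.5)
The plan is to compute $\dim_{\Fp}A_n/pA_n$ from the $\Lambda$-module $X$ via a control theorem, and then read off the $p$-rank growth from the elementary decomposition \eqref{eq:X-elementary-decomposition}.

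\textbf{Step 1: descent.} Since every ramified prime of $K$ is totally ramified in $K_\infty/K$, the classical descent argument (see \cite[\S13.3]{Washington1997}) applies. It gives a submodule $Y_0\subseteq X$ with $X/Y_0$ finite, and, writing $\nu_n=(\gamma^{p^n}-1)/(\gamma-1)$ and $\omega_n=\gamma^{p^n}-1$,
\[
 A_n\simeq X/\nu_nY_0\qquad\text{for all } n.
\]
There are exact sequences
\[
 0\to Y_0/\nu_nY_0\to A_n\to X/Y_0\to 0 .
\]
Tensoring with $\Fp$ (right exactness, plus the bounded $\mathrm{Tor}$ term coming from the finite module $X/Y_0$) shows
\[
 d_n=\dim_{\Fp}\bigl(Y_0/(p,\nu_n)Y_0\bigr)+O(1).
\]
So the problem reduces to computing $\dim_{\Fp}M/(p,\nu_n)M$ for the finitely generated torsion $\Lambda$-module $M=Y_0$. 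Moreover $M$ is pseudo-isomorphic to $X$, so it has the same invariant $\delta$.

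\textbf{Step 2: pseudo-isomorphism invariance.} Let $f:M\to E$ be a pseudo-isomorphism to the elementary module $E$ in \eqref{eq:X-elementary-decomposition}. Its kernel and cokernel are finite. Applying $-\otimes_\Lambda\Lambda/(p,\nu_n)$, the dimensions of $M/(p,\nu_n)M$ and $E/(p,\nu_n)E$ differ by at most a constant. This constant is bounded by the sizes of the kernel and cokernel, together with a $\mathrm{Tor}_1$ term of the finite cokernel, and is therefore independent of $n$. It thus suffices to treat $E$ one summand at a time.

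\textbf{Step 3: summand computations.} Note that $\Lambda/(p,\nu_n)=\Fp[T]/(\nu_n \bmod p)$ and $\nu_n\equiv T^{p^n-1}\pmod p$.
\begin{itemize}
\item For a summand $\Lambda/(p^{e_j})$, the quotient is $\Fp[[T]]/(T^{p^n-1})$, of dimension $p^n-1$.
\item For a summand $\Lambda/(f_\ell^{a_\ell})$, with $f_\ell$ distinguished of degree $\lambda_\ell$, we have $f_\ell^{a_\ell}\equiv T^{a_\ell\lambda_\ell}\pmod p$. The quotient is $\Fp[[T]]/(T^{\min(a_\ell\lambda_\ell,\,p^n-1)})$, of dimension at most $a_\ell\lambda_\ell$, hence $O(1)$.
\end{itemize}
Summing over the summands gives $d_n=\delta(p^n-1)+O(1)=\delta p^n+O(1)$.

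\textbf{Step 4: the equivalence.} If $\delta>0$, then $d_n\sim\delta p^n$ grows linearly in $p^n$. If $\delta=0$, then $d_n$ is bounded. Since $\mu>0\iff\delta>0$, this gives the final assertion.

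The main obstacle is Step 1: justifying the uniform descent $A_n\simeq X/\nu_nY_0$, which is where the total-ramification normalization is used. Care is also needed that all the error terms in Steps 1–2 are bounded independently of $n$. The plan is to cite Washington's Lemma~13.18 and Proposition~13.22 for the descent, and to handle the error terms with the snake lemma applied to multiplication by $(p,\nu_n)$, using that every error term is controlled by a fixed finite module.
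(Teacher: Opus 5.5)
Your proof is correct, but it follows a different route from the paper. The paper's proof does no descent at all. It quotes Perbet's theorem on $p$-ranks of class groups in uniform $p$-adic Lie extensions, specialised to dimension one so that the error term $O(p^{n(d-1)})$ becomes $O(1)$, and also cites Hajir--Maire.

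You instead prove the $\Zp$-case directly in four steps:
\begin{enumerate}
\item the control isomorphism $A_n\simeq X/\nu_nY_0$, which is exactly where the paper's total-ramification normalization is used;
\item the reduction of $d_n$ to $\dim_{\Fp}M/(p,\nu_n)M$ up to $O(1)$;
\item pseudo-isomorphism invariance of that dimension up to $O(1)$;
\item the explicit computation $\Lambda/(p,\nu_n)\simeq\Fp[[T]]/(T^{p^n-1})$ on elementary summands.
\end{enumerate}

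One point deserves to be made explicit. In Step 2, the $\mathrm{Tor}_1^{\Lambda}\bigl(C,\Lambda/(p,\nu_n)\bigr)$ term must be bounded independently of $n$. This holds because the ideal $(p,\nu_n)$ is generated by two elements, so this $\mathrm{Tor}_1$ is a subquotient of $C\oplus C$. Alternatively, apply the snake lemma twice, first for multiplication by $p$ and then for multiplication by $\nu_n$ on the resulting $\Fp[[T]]$-modules.

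Your argument gives transparency and an effective error term. It shows $\delta(p^n-1)-C\leq d_n\leq\delta(p^n-1)+\lambda+C$, with $C$ depending only on $A_0$ and the chosen pseudo-isomorphism, so the $\lambda$-part contributes at most $\lambda$ to the $p$-rank. The paper's citation gives generality, covering higher-dimensional uniform $p$-adic Lie extensions where no comparably clean descent formula is available. The cost is that it is a black box for the simple one-dimensional case actually needed here.
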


\begin{proof}
This is a standard consequence of the structure theory of Iwasawa
modules. More generally, Perbet proves that for a uniform $p$-adic Lie
extension of dimension $d$, the $p$-rank of the class group at the
$n$-th layer satisfies
\[
 d_pA(K_n)
 =
 r(X)p^{dn}+O\bigl(p^{n(d-1)}\bigr),
\]
where
\[
 r(X)=\operatorname{rank}_{\Fp[[\Gamma]]}X[p].
\]
See \cite{Perbet2011}; see also
\cite[Theorem~1.7 and Corollary~1.8]{HajirMaire2019}.
\end{proof}
\section{Augmentation quotients and asymptotic freeness}
\label{sec:asymptotic-freeness}

The purpose of this section is to compare the low-degree augmentation
quotients of $G_n$ with those of a free pro-$p$ group having the same
number of generators.
\begin{lemma}
\label{lem:fixed-degree}
For each $n$, let $G_n$ have a minimal pro-$p$ presentation with
$d_n$ generators and $r_n$ relations. Setting \[S_m(d_n):=\sum_{q=2}^m(m-q+1)d_n^{m-q},\] we find that for every $m\geq2$,
\begin{equation}\label{eq:fixed-bound}
 d_n^m-r_nS_m(d_n)
 \leq c_{m,n}\leq d_n^m.
\end{equation}
Consequently, if $d_n\to\infty$ and $r_n=O(d_n)$, then, for every
fixed $m$,
\begin{equation}\label{eq:fixed-asymp}
 c_{m,n}=d_n^m+O_m(d_n^{m-1}).
\end{equation}
\end{lemma}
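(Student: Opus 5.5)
Upper bound: the minimal presentation $1\to R_n\to F\to G_n\to1$, with $F$ free pro-$p$ on $d_n$ generators, induces a surjection $\Fp[[F]]\to\Fp[[G_n]]$. Its kernel is the closed two-sided ideal $J$ generated by the elements $\rho_i-1$, one for each relator $\rho_1,\dots,\rho_{r_n}$. The surjection carries $I_F$ onto $I_n$, so $I_n^m/I_n^{m+1}$ is a quotient of $I_F^m/I_F^{m+1}$. By the Magnus isomorphism recalled in the introduction, the latter has dimension $d_n^m$. This gives $c_{m,n}\le d_n^m$.

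Lower bound: I will compute
\[
 I_n^m/I_n^{m+1}\simeq I_F^m/\bigl(I_F^{m+1}+(J\cap I_F^m)\bigr)
\]
and bound the dimension of the image of $J\cap I_F^m$ in the graded piece $\mathrm{gr}^m=I_F^m/I_F^{m+1}$. Since the presentation is minimal, $R_n\subseteq D_2(F)$. By Jennings' identification of the Zassenhaus filtration with $D_k(F)=\{g:g-1\in I_F^k\}$, each $u_i=\rho_i-1$ lies in $I_F^2$. Writing the Magnus algebra as $\Fp\langle\langle X_1,\dots,X_{d_n}\rangle\rangle$ with $I_F$ the ideal of series without constant term, every element of $J$ is a convergent sum $\sum_i\sum a\,u_i\,b$. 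The terms of $a u_i b$ in total degree $m$ involve a monomial $a_\alpha$ of degree $s$, a homogeneous component of $u_i$ of degree $q\ge 2$, and a monomial $b_\beta$ of degree $m-q-s$.

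The main obstacle is that $J\cap I_F^m$ is not generated by products whose degrees add up to at least $m$: lower-degree parts could cancel. To avoid this, I will not try to identify the initial forms of $J\cap I_F^m$ exactly. Instead I will show they lie in the span $V_m$ of the degree-$m$ parts of the elements $a_\alpha u_i b_\beta$ with $\deg a_\alpha+\deg b_\beta\le m-2$. Concretely, for any $x\in J\cap I_F^m$, write $x=\sum_i\sum_{\alpha,\beta}\lambda_{\alpha\beta i}\,a_\alpha u_i b_\beta$ over monomials. Then reduce modulo $I_F^{m+1}$, that is, truncate everything to total degree at most $m$. This truncation is a homomorphism onto the finite-dimensional algebra $\Fp\langle\langle X\rangle\rangle/I_F^{m+1}$. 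In the quotient, the image of $J$ is spanned by $\bar a_\alpha\bar u_i\bar b_\beta$ with $\deg a_\alpha+\deg b_\beta\le m-2$, because all other products vanish as $u_i\in I_F^2$. The image of $J$ intersected with degree $\ge m$ is then contained in the degree-$m$ part of $\bar J$, and that part is spanned by the degree-$m$ components of those finitely many elements. I will count these with care.

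Counting: for a fixed relator $u_i$ and a fixed split $s+t=m-q$ into left and right monomial degrees, the pairs $(a_\alpha,b_\beta)$ number $d_n^{s}d_n^{t}=d_n^{m-q}$. There are $m-q+1$ such splits. Summing over $q=2,\dots,m$ and over the $r_n$ relators gives at most $r_n\sum_{q=2}^m(m-q+1)d_n^{m-q}=r_nS_m(d_n)$ elements. However, one should organise the argument as a genuine span bound. I plan to handle this with a triangular (leading-degree) argument: the degree-$m$ part of $\bar J$ is contained in the span of the pieces $a_\alpha\cdot(u_i)_q\cdot b_\beta$, where $(u_i)_q$ is the degree-$q$ component of $u_i$. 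This works because any element of $\bar J$ is a combination of $\bar a\bar u_i\bar b$, and its degree-$m$ component is the corresponding combination of these pieces. That yields $\dim V_m\le r_nS_m(d_n)$, and hence the lower bound. Finally, $S_m(d)=O_m(d^{m-2})$, so $r_n=O(d_n)$ gives the error $O_m(d_n^{m-1})$.
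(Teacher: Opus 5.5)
Your proposal is correct and follows essentially the same route as the paper. Both arguments use the Magnus isomorphism and the fact that $\rho_i-1\in I_F^2$ by minimality. Both then pass to the truncation modulo $I_F^{m+1}$ and count the spanning elements $a_\alpha\,(u_i)_q\,b_\beta$, which gives the loss bound $r_nS_m(d_n)$.

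Your justification that the degree-$m$ image of $J\cap I_F^m$ lies in this span is linearity of projection onto the degree-$m$ component. It is not really a ``triangular'' argument, but it is valid. It also makes precise a step the paper only asserts, namely that ``the degree-$m$ part of that ideal is spanned by $u f_{i,n,q} v$''.
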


\begin{proof}
Fix $n$, and choose a minimal pro-$p$ presentation
\[
 1\longrightarrow R_n\longrightarrow \mathscr F_n
 \longrightarrow G_n\longrightarrow1,
\]
where $\mathscr F_n$ is the free pro-$p$ group on
$x_1,\ldots,x_{d_n}$ and $R_n$ is normally generated by
$r_n$ elements.  Let $\mathfrak m_n$ denote the augmentation ideal
of $\Fp[[\mathscr F_n]]$.  The Magnus isomorphism identifies
$\Fp[[\mathscr F_n]]$ with the algebra of noncommutative formal
power series
$\Fp\langle\!\langle X_1,\ldots,X_{d_n}\rangle\!\rangle$ by sending
$x_i$ to $1+X_i$.  Under this identification, $\mathfrak m_n$
corresponds to the ideal of series with zero constant term.
Consequently,
$\mathfrak m_n^j/\mathfrak m_n^{j+1}$ is naturally the space of
homogeneous noncommutative polynomials of degree $j$ in the variables
$X_1,\ldots,X_{d_n}$.  Its $\Fp$-dimension is therefore $d_n^j$.

Choose relators
$\rho_{1,n},\ldots,\rho_{r_n,n}$ which minimally normally generate
$R_n$.  Let $J_n$ denote the closed two-sided ideal of
$\Fp[[\mathscr F_n]]$ generated by the elements
$\rho_{i,n}-1$.  Since the presentation is minimal, the image of every
$\rho_{i,n}$ in
$\mathscr F_n/\Phi(\mathscr F_n)$ is trivial.  For a pro-$p$ group,
$\Phi(\mathscr F_n)=D_2(\mathscr F_n)$, and the dimension-subgroup
description of the Zassenhaus filtration gives
$\rho_{i,n}-1\in\mathfrak m_n^2$.  Thus its Magnus expansion has the
form
\[
 \rho_{i,n}-1=f_{i,n,2}+f_{i,n,3}+\cdots,
\]
where each $f_{i,n,q}$ is homogeneous of degree $q$.

We now determine how much the ideal $J_n$ can remove from degree
$m$.  Since we work modulo $\mathfrak m_n^{m+1}$, only finitely many
homogeneous pieces of the Magnus expansions are relevant.  Moreover,
the image of the closure of $J_n$ in the finite-dimensional algebra
$\Fp[[\mathscr F_n]]/\mathfrak m_n^{m+1}$ is the ordinary ideal
generated by the images of the $\rho_{i,n}-1$.

The degree-$m$ part of that ideal is spanned by expressions
$u f_{i,n,q}v$, where $2\leq q\leq m$, and where $u$ and $v$ are
homogeneous noncommutative polynomials whose degrees add up to
$m-q$.  Fix $i$ and $q$, and write $a=\deg u$.  There are
$d_n^a$ monomials of degree $a$ and
$d_n^{m-q-a}$ monomials of degree $m-q-a$.  Hence the subspace
obtained for this fixed value of $a$ is spanned by at most
$d_n^{m-q}$ elements.  Since $a$ can take the $m-q+1$ values
$0,\ldots,m-q$, the contribution of the degree-$q$ part of one
relator has dimension at most $(m-q+1)d_n^{m-q}$. We find that
\[
 \dim_{\Fp}
 \frac{(J_n\cap\mathfrak m_n^m)+\mathfrak m_n^{m+1}}
      {\mathfrak m_n^{m+1}}
 \leq
 r_n\sum_{q=2}^m(m-q+1)d_n^{m-q}.
\]

Since
$\Fp[[G_n]]\simeq\Fp[[\mathscr F_n]]/J_n$, the $m$-th augmentation
quotient of $\Fp[[G_n]]$ is obtained from
$\mathfrak m_n^m/\mathfrak m_n^{m+1}$ by quotienting by the subspace
displayed above.  The free space has dimension $d_n^m$, and therefore
\eqref{eq:fixed-bound} follows.

For fixed $m$, the highest power of $d_n$ occurring in $S_m(d_n)$
comes from $q=2$ and is $d_n^{m-2}$.  Hence
$S_m(d_n)=O_m(d_n^{m-2})$.  If $r_n=O(d_n)$, the total possible loss
in degree $m$ is consequently $O_m(d_n^{m-1})$, which proves
\eqref{eq:fixed-asymp}.
\end{proof}

The preceding estimate may also be summed over all degrees, provided
one stays inside the disc on which the free Hilbert series converges. Recall that $H_n(t)=\sum_{m\geq0}c_{m,n}t^m$ is the Hilbert series
of the augmentation filtration of $\Fp[[G_n]]$.

\begin{lemma}\label{lem:hilbert-defect}
Assume $d_n\geq1$.  For
$0\leq t<d_n^{-1}$ one has
\begin{equation}\label{eq:hilbert-defect}
 0\leq\frac{1}{1-d_nt}-H_n(t)
 \leq
 \frac{r_nt^2}{(1-t)(1-d_nt)^2}.
\end{equation}
\end{lemma}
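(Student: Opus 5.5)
We will sum the degreewise bounds of Lemma~\ref{lem:fixed-degree} against $t^m$. Write $d=d_n$, $r=r_n$, and fix $0\le t<d^{-1}$. The left-hand inequality of \eqref{eq:hilbert-defect} is immediate from the upper bound $c_{m,n}\le d^m$ in \eqref{eq:fixed-bound}, which also holds for $m=0,1$: we have $c_{0,n}=1$ and $c_{1,n}=d$. Since every term is nonnegative and $\sum_m d^mt^m=(1-dt)^{-1}$ converges, $H_n(t)$ converges and satisfies $H_n(t)\le(1-dt)^{-1}$.

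For the right-hand inequality, note that the terms with $m=0,1$ contribute nothing to the defect. For $m\ge2$ the lower bound in \eqref{eq:fixed-bound} gives $d^m-c_{m,n}\le rS_m(d)$, so
\[
 \frac{1}{1-dt}-H_n(t)\le r\sum_{m\ge2}S_m(d)t^m
 = r\sum_{m\ge2}\sum_{q=2}^m(m-q+1)d^{m-q}t^m .
\]
We interchange the order of summation, which is justified because all terms are nonnegative. Substitute $k=m-q\ge0$, so that $t^m=t^q\,t^k$. The double sum then factors as
\[
 \Bigl(\sum_{q\ge2}t^q\Bigr)\Bigl(\sum_{k\ge0}(k+1)(dt)^k\Bigr)
 =\frac{t^2}{1-t}\cdot\frac{1}{(1-dt)^2},
\]
using $dt<1$ for the second factor and $t\le dt<1$ (since $d\ge1$) for the first. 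This is exactly the right-hand side of \eqref{eq:hilbert-defect}.

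There is no serious obstacle here. The only care needed is to check that the bound \eqref{eq:fixed-bound} is used in the correct range $m\ge2$, with the degree $0$ and $1$ terms matching exactly, and that the hypothesis $d_n\ge1$ is what guarantees $t<1$, so that the geometric series in $q$ converges.
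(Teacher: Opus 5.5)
Your proposal is correct and takes essentially the same approach as the paper. Both sum the coefficientwise bounds $0\le d_n^m-c_{m,n}\le r_nS_m(d_n)$ from Lemma~\ref{lem:fixed-degree}, with no loss in degrees $0$ and $1$, and then reindex $\sum_{m\ge2}S_m(d_n)t^m$; your factor $\sum_{k\ge0}(k+1)(d_nt)^k$ is the paper's double sum over left and right word degrees with $a+b=k$, and your remark that $d_n\ge1$ forces $t<1$ spells out a convergence point the paper leaves implicit.
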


\begin{proof}
Lemma~\ref{lem:fixed-degree} gives, coefficient by coefficient,
$0\leq d_n^m-c_{m,n}\leq r_nS_m(d_n)$ for $m\geq2$.  In degrees
zero and one there is no loss, as the constant coefficient is one and
$c_{1,n}=d_n$.  Since $d_nt<1$, both the free series and all the
majorizing series below converge absolutely.  We may therefore sum
the inequalities to obtain
\[
 0\leq\frac{1}{1-d_nt}-H_n(t)
 \leq r_n\sum_{m\geq2}S_m(d_n)t^m.
\]

It remains to evaluate the series on the right.  A term contributing
to $S_m(d_n)$ consists of a homogeneous piece of a relator of degree
$q\geq2$, together with a word of degree $a$ on its left and a word
of degree $b$ on its right, where $m=q+a+b$.  Hence
\[
 \sum_{m\geq2}S_m(d_n)t^m
 =
 \sum_{q\geq2}\sum_{a,b\geq0}
 d_n^{a+b}t^{q+a+b}
 =
 \frac{t^2}{(1-t)(1-d_nt)^2}.
\]
Substitution proves \eqref{eq:hilbert-defect}.
\end{proof}
We now record the two estimates from Iwasawa theory and class field
theory that will be used in the augmentation arguments below.

\begin{proposition}\label{prop:dr-growth}
Assume $\mu>0$.  Then, for all sufficiently large $n$,
\begin{equation}\label{eq:dn-delta}
 d_n=\delta p^n+O(1),
\end{equation}
and
\begin{equation}\label{eq:rn-linear}
 r_n
 \leq d_n+(r_1(K)+r_2(K))p^n+O(1)
 =O(p^n)=O(d_n).
\end{equation}
In particular,
\begin{equation}\label{eq:r-over-d2}
 \frac{r_n}{d_n^2}=O(p^{-n}),
\end{equation}
and
\begin{equation}\label{eq:r-over-d-limsup}
 \limsup_{n\to\infty}\frac{r_n}{d_n}
 \leq
 1+\frac{r_1(K)+r_2(K)}{\delta}.
\end{equation}
\end{proposition}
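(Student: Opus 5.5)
The plan is to assemble the statement from results already established: Proposition~\ref{prop:finite-layer-profile} for the generator rank, and Shafarevich's bound \eqref{eq:rn-explicit} for the relation rank. No new arithmetic input is needed.

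First, \eqref{eq:dn-delta} is exactly Proposition~\ref{prop:finite-layer-profile}. Since $\mu>0$, we have $\delta\geq1$, so for all large $n$ we get $d_n\geq \tfrac{\delta}{2}p^n$. In particular $d_n\asymp p^n$ and $d_n\to\infty$.

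Second, for \eqref{eq:rn-linear}, apply Proposition~\ref{prop:shafarevich} to $F=K_n$. As explained after that proposition, $r_i(K_n)=p^nr_i(K)$ because $[K_n:K]$ is odd, and $\delta_p(K_n)=\delta_p(K)$ because $[\Q(\mu_p):\Q]$ is prime to $p$. This gives $r_n\leq d_n+(r_1(K)+r_2(K))p^n-1+\delta_p(K)$, which is \eqref{eq:rn-explicit}. Substituting $d_n=\delta p^n+O(1)$ yields $r_n=O(p^n)$. The lower bound $d_n\geq\tfrac{\delta}{2}p^n$ converts this into $r_n=O(d_n)$.

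Third, for \eqref{eq:r-over-d2}, write $r_n/d_n^2=O(p^n)/d_n^2$. Since $d_n^2\gg p^{2n}$, this is $O(p^{-n})$.

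Finally, for \eqref{eq:r-over-d-limsup}, divide the explicit bound by $d_n$:
\[
\frac{r_n}{d_n}\leq 1+\frac{(r_1(K)+r_2(K))p^n+O(1)}{\delta p^n+O(1)}.
\]
Letting $n\to\infty$, the fraction tends to $(r_1(K)+r_2(K))/\delta$, which gives the limsup bound.

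The only point needing care is positivity of the denominators, i.e.\ that $\delta>0$. This is exactly the assumption $\mu>0$, via the equivalence noted after \eqref{eq:X-elementary-decomposition}. There is no substantive obstacle beyond this bookkeeping.
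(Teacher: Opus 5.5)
Your proposal is correct and follows essentially the same route as the paper: you take $d_n=\delta p^n+O(1)$ from Proposition~\ref{prop:finite-layer-profile} and the bound on $r_n$ from Shafarevich's estimate \eqref{eq:rn-explicit}, and then divide by $d_n^2\asymp p^{2n}$ and by $d_n$ to get the last two assertions. Your explicit lower bound $d_n\geq\tfrac{\delta}{2}p^n$ is simply a spelled-out form of the paper's remark that $d_n\asymp p^n$ once $\delta>0$.
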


\begin{proof}
By \eqref{eq:dn-prank}, the number $d_n$ is the $p$-rank of $A_n$.
Proposition~\ref{prop:finite-layer-profile} therefore gives
\eqref{eq:dn-delta}.  Since $\mu>0$, one has $\delta>0$, and hence
$d_n\asymp p^n$.

The relation-rank estimate \eqref{eq:rn-explicit} gives
\[
 r_n\leq d_n+(r_1(K)+r_2(K))p^n+O(1).
\]
As both terms on the right are $O(p^n)$, this proves
$r_n=O(p^n)=O(d_n)$.  Dividing by $d_n^2\asymp p^{2n}$ gives
\eqref{eq:r-over-d2}.  Finally, divide the same relation-rank bound by
$d_n=\delta p^n+O(1)$ and let $n$ tend to infinity.  The first term
contributes $1$, while the second contributes at most
$(r_1(K)+r_2(K))/\delta$, proving
\eqref{eq:r-over-d-limsup}.
\end{proof}

\begin{theorem}\label{thm:augmentation}
Assume $\mu>0$.  For every fixed $m\geq2$,
\begin{equation}\label{eq:augmentation-explicit}
 0\leq d_n^m-c_{m,n}\leq r_nS_m(d_n).
\end{equation}
Consequently,
\begin{equation}\label{eq:augmentation}
 c_{m,n}
 =
 d_n^m+O_m(r_nd_n^{m-2})
 =
 d_n^m+O_m(d_n^{m-1})
 =
 \delta^mp^{mn}+O_m(p^{(m-1)n}).
\end{equation}
For $m=1$, one has $c_{1,n}=d_n=\delta p^n+O(1)$.
\end{theorem}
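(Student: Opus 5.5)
The theorem is essentially a specialization of Lemma~\ref{lem:fixed-degree} combined with the growth estimates of Proposition~\ref{prop:dr-growth}. We therefore argue in three steps.

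\textbf{Step 1: the two-sided bound.} For each $n$ we choose a minimal presentation of $G_n$ with $d_n$ generators and $r_n$ relations. Here $r_n=\dim_{\Fp}H^2(G_n,\Fp)$ is finite by Proposition~\ref{prop:shafarevich}. Applying \eqref{eq:fixed-bound} directly gives \eqref{eq:augmentation-explicit}: the upper bound $c_{m,n}\le d_n^m$ comes from $\Fp[[G_n]]$ being a quotient of the Magnus algebra. The lower bound comes from counting the spanning set $uf_{i,n,q}v$ of the degree-$m$ part of the relation ideal. No hypothesis on $\mu$ is needed for this step.

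\textbf{Step 2: the error term.} For fixed $m$, the polynomial $S_m(d)=\sum_{q=2}^m(m-q+1)d^{m-q}$ has degree $m-2$ with leading coefficient $m-1$. Hence $S_m(d_n)\le C_m d_n^{m-2}$ for $d_n\ge1$, where one may take $C_m=\sum_{q}(m-q+1)$. Since $\mu>0$, Proposition~\ref{prop:dr-growth} gives $d_n=\delta p^n+O(1)\to\infty$, so $d_n\ge1$ for $n$ large. This yields $c_{m,n}=d_n^m+O_m(r_nd_n^{m-2})$. Using $r_n=O(d_n)$ from \eqref{eq:rn-linear}, we get $O_m(d_n^{m-1})$.

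\textbf{Step 3: expansion in $p^n$.} We write $d_n=\delta p^n+\epsilon_n$ with $\epsilon_n=O(1)$. The binomial expansion gives
\[
d_n^m=\delta^mp^{mn}+\sum_{k\ge1}\binom{m}{k}\delta^{m-k}p^{(m-k)n}\epsilon_n^k=\delta^mp^{mn}+O_m(p^{(m-1)n}).
\]
Also $d_n^{m-1}\asymp p^{(m-1)n}$. Together these give the final equality in \eqref{eq:augmentation}. For $m=1$, the identity $I/I^2\cong G_n/\Phi(G_n)\otimes\Fp$ gives $c_{1,n}=\dim H^1(G_n,\Fp)=d_n$, and Proposition~\ref{prop:finite-layer-profile} then gives $d_n=\delta p^n+O(1)$.

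The only mildly delicate point is bookkeeping: the $O_m$ constants must be uniform in $n$. This holds because $S_m$ is a fixed polynomial, and the implied constants in $r_n=O(d_n)$ and $\epsilon_n=O(1)$ are independent of $m$. The statement holds for $n$ sufficiently large, which is all the asymptotic claim requires.
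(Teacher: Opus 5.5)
Your proposal is correct and follows essentially the same route as the paper: you invoke Lemma~\ref{lem:fixed-degree} for the two-sided bound, use $S_m(d_n)=O_m(d_n^{m-2})$ together with $r_n=O(d_n)$ from Proposition~\ref{prop:dr-growth}, expand $d_n^m$ binomially around $\delta p^n$, and treat $m=1$ via the Frattini quotient. Your extra remarks, namely the explicit constant $C_m=m(m-1)/2$ and the uniformity in $n$ of the implied constants, are accurate and only make the paper's brief argument more explicit.
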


\begin{proof}
The first inequality is precisely Lemma~\ref{lem:fixed-degree}.
For fixed $m$, we have
$S_m(d_n)=O_m(d_n^{m-2})$, and therefore
$d_n^m-c_{m,n}=O_m(r_nd_n^{m-2})$.  Proposition~\ref{prop:dr-growth}
gives $r_n=O(d_n)$, which yields the second error estimate. Since $d_n=\delta p^n+O(1)$, the binomial theorem gives
$d_n^m=\delta^mp^{mn}+O_m(p^{(m-1)n})$ for fixed $m$.  Combining
the two estimates proves \eqref{eq:augmentation}.  In degree one,
$I_n/I_n^2$ is the Frattini quotient of $G_n$ after tensoring with
$\Fp$, so its dimension is $d_n$ by definition.
\end{proof}

The preceding theorem gives the asymptotic behavior in each fixed
degree.  We now combine these estimates to obtain a uniform statement
for the Hilbert series after the natural rescaling by $p^n$.

\begin{theorem}
\label{thm:hilbert-rate}
Assume $\mu>0$.  For every $R<\delta^{-1}$,
\begin{equation}\label{eq:hilbert-uniform}
 \sup_{|z|\leq R}
 \left|
 H_n(z/p^n)-\frac{1}{1-\delta z}
 \right|
 \ll_R p^{-n}.
\end{equation}
\end{theorem}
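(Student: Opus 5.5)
We split the difference by the triangle inequality:
\[
 \Bigl|H_n(z/p^n)-\tfrac{1}{1-\delta z}\Bigr|
 \le
 \Bigl|H_n(z/p^n)-\tfrac{1}{1-d_nz/p^n}\Bigr|
 +\Bigl|\tfrac{1}{1-d_nz/p^n}-\tfrac{1}{1-\delta z}\Bigr|,
\]
and bound each term by $O_R(p^{-n})$ uniformly for $|z|\le R$. Fix $R'$ with $R<R'<\delta^{-1}$. Since $d_n/p^n=\delta+O(p^{-n})$ by Proposition~\ref{prop:dr-growth}, we have $d_nR/p^n\le \delta R'<1$ for all large $n$; the finitely many small $n$ only affect the implied constant, or can be excluded by taking $n$ large.

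For the second term, write $\epsilon_n=d_n/p^n-\delta=O(p^{-n})$. Then
\[
 \frac{1}{1-d_nz/p^n}-\frac{1}{1-\delta z}=\frac{\epsilon_n z}{(1-d_nz/p^n)(1-\delta z)}.
\]
For $|z|\le R$ the denominators are bounded below in absolute value by $(1-\delta R')(1-\delta R)>0$. Hence this term is $\ll_R |\epsilon_n|\ll p^{-n}$.

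For the first term, we cannot apply Lemma~\ref{lem:hilbert-defect} directly, because $z$ is complex. We instead compare coefficientwise, using $0\le d_n^m-c_{m,n}\le r_nS_m(d_n)$ from Lemma~\ref{lem:fixed-degree}, together with the trivial equalities in degrees $0$ and $1$. Setting $t=|z|/p^n\le R/p^n<d_n^{-1}$, we get
\[
 \Bigl|\tfrac{1}{1-d_nz/p^n}-H_n(z/p^n)\Bigr|\le\sum_{m\ge2}(d_n^m-c_{m,n})t^m\le \frac{r_nt^2}{(1-t)(1-d_nt)^2}.
\]
The last inequality is the generating-function evaluation done in the proof of Lemma~\ref{lem:hilbert-defect}. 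Absolute convergence also shows that $H_n(z/p^n)$ is defined on the whole disc. With $t\le R/p^n$ and $d_nt\le\delta R'<1$, the bound becomes
\[
 \ll_R \frac{r_n}{p^{2n}}=O(p^{-n}),
\]
by \eqref{eq:rn-linear}.

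The only delicate point is the uniformity near the boundary of the disc. The geometric factor $(1-d_nt)^{-2}$ must stay bounded uniformly in $n$, and this is exactly why we need the margin $R<\delta^{-1}$ together with the convergence $d_n/p^n\to\delta$ at rate $O(p^{-n})$. Once that margin is fixed via $R'$, both terms are routine, and adding them gives \eqref{eq:hilbert-uniform}.
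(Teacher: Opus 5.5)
Your proposal is correct and follows the paper's proof essentially step for step. Both arguments use the same triangle-inequality split through the free series $(1-d_nz/p^n)^{-1}$, the coefficientwise majorization from Lemma~\ref{lem:fixed-degree} summed as in Lemma~\ref{lem:hilbert-defect}, and the estimates $r_n=O(p^n)$ and $d_n/p^n=\delta+O(p^{-n})$; your $R'$ plays the role of the paper's $\eta$.

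One small correction: of your two options for the early layers, only discarding them (as the paper does) is valid. For small $n$ the series $H_n(z/p^n)$ need not converge on all of $|z|\le R$, so those layers cannot be absorbed into the implied constant.
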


\begin{proof}
Put $u_n=d_n/p^n$.  Equation \eqref{eq:dn-delta} gives
$u_n=\delta+O(p^{-n})$.  Fix $R<\delta^{-1}$.  Since
$u_n\to\delta$, after discarding finitely many layers there is an
$\eta<1$ such that $u_nR\leq\eta$ for every remaining $n$.

We first compare $H_n(z/p^n)$ with the Hilbert series of the free
pro-$p$ group on $d_n$ generators.  For $|z|\leq R$, absolute
convergence follows from $u_n|z|\leq\eta$.  Using the coefficient
inequality of Lemma~\ref{lem:fixed-degree}, and then summing exactly
as in Lemma~\ref{lem:hilbert-defect}, we obtain
\begin{align*}
 \left|
 \frac{1}{1-u_nz}-H_n(z/p^n)
 \right|
 &\leq
 \sum_{m\geq2}
 (d_n^m-c_{m,n})\frac{|z|^m}{p^{mn}} \\
 &\leq
 \frac{r_n|z|^2p^{-2n}}
 {(1-|z|p^{-n})(1-u_n|z|)^2}.
\end{align*}
For $|z|\leq R$, the first denominator tends uniformly to one, while
the second is at least $(1-\eta)^2$.  Since
$r_n=O(p^n)$ by Proposition~\ref{prop:dr-growth}, the entire
expression is $O_R(p^{-n})$.

It remains to compare the two free series.  Again uniformly for
$|z|\leq R$,
\[
 \left|
 \frac{1}{1-u_nz}-\frac{1}{1-\delta z}
 \right|
 =
 \frac{|u_n-\delta||z|}
 {|1-u_nz|\,|1-\delta z|}
 \ll_R p^{-n}.
\]
The triangle inequality now proves \eqref{eq:hilbert-uniform}.
\end{proof}

We next turn from fixed augmentation degrees to the exponential
growth of all degrees simultaneously.  Recall that
$\rho(G_n)$ is the reciprocal of the radius of convergence of
$H_n(t)$.

\begin{proposition}
\label{prop:quadratic-rho}
Let $G$ be a finitely presented pro-$p$ group with a minimal
presentation on $d$ generators and $r$ relations.  Suppose
$d^2>4r$.  Then
\begin{equation}\label{eq:rho-quadratic}
 \frac{d+\sqrt{d^2-4r}}{2}
 \leq \rho(G)\leq d.
\end{equation}
Consequently,
\begin{equation}\label{eq:rho-defect}
 0\leq d-\rho(G)
 \leq
 \frac{d-\sqrt{d^2-4r}}{2}
 =
 \frac{2r}{d+\sqrt{d^2-4r}}
 \leq\frac{2r}{d}.
\end{equation}
\end{proposition}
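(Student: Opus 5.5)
The plan is to prove the two inequalities in \eqref{eq:rho-quadratic} separately, and then to deduce \eqref{eq:rho-defect} by elementary algebra.

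\emph{Upper bound.} By Lemma~\ref{lem:fixed-degree} applied to a single group (its proof uses nothing about $n$), every augmentation quotient satisfies $c_m(G)\leq d^m$. Hence $c_m(G)^{1/m}\leq d$ for all $m$, and taking the $\limsup$ in \eqref{rho(G) defn} gives $\rho(G)\leq d$. Equivalently, $H_G(t)$ is dominated coefficientwise by $(1-dt)^{-1}$, so its radius of convergence is at least $d^{-1}$.

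\emph{Lower bound.} We use Proposition~\ref{prop:GS-spectral-bound}. Put $q(t)=1-dt+rt^2$. If $r=0$, then $G$ is free, $c_m(G)=d^m$, and $\rho(G)=d$, which matches the claimed lower bound. So assume $r\geq1$. Since $d^2>4r$, the polynomial $q$ has two positive real roots
\[
 t_\pm=\frac{d\pm\sqrt{d^2-4r}}{2r},
\]
and $q(t)<0$ for $t_-<t<t_+$. Rationalizing gives $t_-=2/(d+\sqrt{d^2-4r})=1/\beta$, where $\beta=(d+\sqrt{d^2-4r})/2$. We have $t_-<1$: indeed $\beta>d/2\geq 1$ when $d\geq2$, and $d^2>4r\geq4$ forces $d\geq3$. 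Now take any $t_0\in(t_-,\min(t_+,1))$. Then $q(t_0)<0$, and by part~(2) of Proposition~\ref{prop:GS-spectral-bound} also $P_G(t_0)\leq q(t_0)<0$. Part~(1) then gives $\rho(G)\geq t_0^{-1}$. Letting $t_0\downarrow t_-$ yields $\rho(G)\geq t_-^{-1}=\beta$, which is the left inequality in \eqref{eq:rho-quadratic}. The main point needing care is exactly this one: we only have strict negativity of $q$ on the open interval, so we must pass to the limit rather than evaluate at $t_-$, and we must check that the interval meets $(0,1)$, which is what the verification $t_-<1$ does.

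\emph{Defect estimate.} Subtracting the lower bound from $d$ gives
\[
 d-\rho(G)\leq \frac{d-\sqrt{d^2-4r}}{2}.
\]
Multiplying numerator and denominator by $d+\sqrt{d^2-4r}$ turns this into $2r/(d+\sqrt{d^2-4r})$. Since $\sqrt{d^2-4r}\geq0$, this is at most $2r/d$. Together with $\rho(G)\leq d$, this proves \eqref{eq:rho-defect}.
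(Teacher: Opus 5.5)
Your proof is correct and follows the paper's argument essentially step for step. The upper bound comes from $\Fp[[G]]$ being a quotient of the free algebra on $d$ generators, which is what the upper bound in Lemma~\ref{lem:fixed-degree} records. The lower bound comes from the quadratic majorant $1-dt+rt^2$ of the Golod--Shafarevich polynomial on $(t_-,\min\{t_+,1\})$, followed by the limit $t_0\downarrow t_-$, and the defect bound is obtained by rationalizing. Your explicit check that $t_-<1$ (using $d\geq3$ when $r\geq1$) is a slightly more careful version of the paper's remark that $d\geq2$ forces $0<t_-<1$.
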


\begin{proof}
The upper bound is immediate from the free presentation.  Indeed,
the completed group algebra of $G$ is a quotient of the completed
group algebra of the free pro-$p$ group on $d$ generators, and hence
$c_m(G)\leq d^m$ for every $m$.  Taking $m$-th roots and upper limits
gives $\rho(G)\leq d$.

If $r=0$, the group is free and equality holds, so suppose $r>0$.
The roots of the quadratic polynomial $1-dt+rt^2$ are
\[
 t_\pm=\frac{d\pm\sqrt{d^2-4r}}{2r}
 \quad\text{and}\quad
 t_-=\frac{2}{d+\sqrt{d^2-4r}}.
\]
The assumption $d^2>4r$ makes these roots real and distinct.
Moreover, except for the already treated free case, it forces
$d\geq2$, and one has $0<t_-<1$.  The quadratic is negative on
$(t_-,t_+)$.  Therefore, for every
$t\in(t_-,\min\{t_+,1\})$, the quadratic majorant
\eqref{eq:quadratic-majorant} gives $P_G(t)<0$.
Proposition~\ref{prop:GS-spectral-bound} then implies
$\rho(G)\geq t^{-1}$.  Letting $t$ decrease to $t_-$ gives the lower
bound in \eqref{eq:rho-quadratic}.

Subtracting this lower bound from $d$ and rationalizing gives
\[
 d-\rho(G)
 \leq
 \frac{d-\sqrt{d^2-4r}}{2}
 =
 \frac{2r}{d+\sqrt{d^2-4r}},
\]
and the denominator is at least $d$.  This proves
\eqref{eq:rho-defect}.

We shall also use the corresponding first-order expansion.  If
$r/d^2\to0$, then
\begin{equation}\label{eq:rho-defect-expansion}
 d-\rho(G)
 \leq
 \frac{r}{d}
 +O\left(\frac{r^2}{d^3}\right).
\end{equation}
This is obtained by expanding
$\sqrt{1-4r/d^2}$ at the origin.
\end{proof}

\begin{theorem}
\label{thm:spectral-strong}
Assume $\mu>0$.  Then
\begin{equation}\label{eq:rho-bounded}
 \rho(G_n)=d_n+O(1)=\delta p^n+O(1).
\end{equation}
More explicitly, for all sufficiently large $n$,
\begin{equation}\label{eq:rho-explicit-error}
 0\leq d_n-\rho(G_n)\leq\frac{2r_n}{d_n}.
\end{equation}
Moreover,
\begin{equation}\label{eq:rho-defect-limsup}
 \limsup_{n\to\infty}
 \bigl(d_n-\rho(G_n)\bigr)
 \leq
 1+\frac{r_1(K)+r_2(K)}{\delta}.
\end{equation}
\end{theorem}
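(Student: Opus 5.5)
The plan is to apply Proposition~\ref{prop:quadratic-rho} to $G=G_n$ for all large $n$, feeding in the growth estimates of Proposition~\ref{prop:dr-growth}. The first step is to check the hypothesis $d_n^2>4r_n$. By \eqref{eq:r-over-d2} we have $r_n/d_n^2=O(p^{-n})$, so $4r_n/d_n^2\to0$. Hence $d_n^2>4r_n$ for all $n\geq n_0$. Here $\mu>0$ is essential, since it gives $\delta>0$ and so $d_n\asymp p^n\to\infty$. The group $G_n$ is finitely presented: $d_n$ is finite because $A_n$ is finite, and $r_n$ is finite by Proposition~\ref{prop:shafarevich}. So the hypotheses of Proposition~\ref{prop:quadratic-rho} hold for the minimal presentation of $G_n$.

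For $n\geq n_0$, inequality \eqref{eq:rho-defect} then gives $0\leq d_n-\rho(G_n)\leq 2r_n/d_n$, which is exactly \eqref{eq:rho-explicit-error}. Since $r_n=O(d_n)$ by \eqref{eq:rn-linear}, the right side is $O(1)$, so $\rho(G_n)=d_n+O(1)$. Combining this with $d_n=\delta p^n+O(1)$ from \eqref{eq:dn-delta} gives \eqref{eq:rho-bounded}.

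For the sharper limsup bound \eqref{eq:rho-defect-limsup}, the constant $2$ in $2r_n/d_n$ is too weak, so I will use the first-order expansion \eqref{eq:rho-defect-expansion} instead. Since $r_n/d_n^2\to0$, it gives
\[
 d_n-\rho(G_n)\leq \frac{r_n}{d_n}+O\left(\frac{r_n^2}{d_n^3}\right).
\]
The error term is $O(d_n^2/d_n^3)=O(1/d_n)\to0$. Taking limsup and applying \eqref{eq:r-over-d-limsup} bounds $\limsup r_n/d_n$ by $1+(r_1(K)+r_2(K))/\delta$, which proves \eqref{eq:rho-defect-limsup}.

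The main point to be careful about is this last step. The expansion of $(d-\sqrt{d^2-4r})/2=\frac{d}{2}\bigl(1-\sqrt{1-4r/d^2}\bigr)$ must be uniform in the small parameter $x=4r/d^2$. I would check that $1-\sqrt{1-x}\leq x/2+x^2/2$ for $0\leq x\leq 1/2$, which makes the $O$-constant absolute. Everything else is bookkeeping from results already established.
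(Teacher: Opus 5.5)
Your proposal is correct and follows the paper's proof essentially step for step. You obtain $d_n^2>4r_n$ for large $n$ from $r_n/d_n^2\to0$, apply Proposition~\ref{prop:quadratic-rho} to get \eqref{eq:rho-explicit-error} and hence \eqref{eq:rho-bounded}, and then combine the first-order expansion \eqref{eq:rho-defect-expansion} (with $r_n^2/d_n^3\to0$) with \eqref{eq:r-over-d-limsup} to get the limsup bound. Your explicit inequality $1-\sqrt{1-x}\leq x/2+x^2/2$, which in fact holds for all $0\leq x\leq 1$, makes rigorous the uniformity of the $O$-constant that the paper leaves implicit.
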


\begin{proof}
Proposition~\ref{prop:dr-growth} gives $r_n=O(d_n)$ and
$d_n\to\infty$.  Hence $r_n/d_n^2\to0$, so in particular
$d_n^2>4r_n$ for all sufficiently large $n$.  Applying
Proposition~\ref{prop:quadratic-rho} gives
\[
 0\leq d_n-\rho(G_n)\leq\frac{2r_n}{d_n}=O(1).
\]
This proves the first equality in \eqref{eq:rho-bounded}; the second
follows from $d_n=\delta p^n+O(1)$.

For the sharper limsup, use
\eqref{eq:rho-defect-expansion}.  Since $r_n=O(d_n)$ and
$d_n\asymp p^n$, its error term satisfies
$r_n^2/d_n^3=O(p^{-n})$.  Therefore
\begin{equation}\label{eq:rho-arithmetic-refined}
 0\leq d_n-\rho(G_n)
 \leq\frac{r_n}{d_n}+O(p^{-n}).
\end{equation}
Taking upper limits and applying
\eqref{eq:r-over-d-limsup} proves
\eqref{eq:rho-defect-limsup}.
\end{proof}

\begin{corollary}\label{cor:eventual-towers}
If $\mu(K_\infty/K)>0$, then $G_n$ is infinite for every sufficiently
large $n$.  Moreover, the radius of convergence of $H_n(t)$ is
\[
 \frac{1}{\delta p^n}+O(p^{-2n}).
\]
\end{corollary}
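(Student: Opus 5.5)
Both assertions follow from Theorem~\ref{thm:spectral-strong}, together with the fact that a finite $p$-group has nilpotent augmentation ideal.

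\emph{Infinitude.} Suppose $G$ is a finite $p$-group. Then $\Fp[[G]]=\Fp[G]$ is a finite-dimensional local algebra. Its augmentation ideal $I_G$ is nilpotent, so $I_G^m=0$ for all large $m$. Hence $c_m(G)=0$ eventually, $H_G(t)$ is a polynomial, and $\rho(G)=0$.

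By Theorem~\ref{thm:spectral-strong}, $\rho(G_n)\geq d_n-2r_n/d_n$ for all sufficiently large $n$. Proposition~\ref{prop:dr-growth} gives $d_n=\delta p^n+O(1)\to\infty$ (using $\delta>0$) and $r_n/d_n=O(1)$. So $\rho(G_n)>0$ for large $n$, and $G_n$ cannot be finite. Equivalently, Proposition~\ref{prop:GS-spectral-bound}(1) shows that $H_n(t)$ diverges at some $t_0<1$, which is impossible for a polynomial.

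\emph{Radius of convergence.} By the definition \eqref{rho(G) defn}, the radius of convergence of $H_n(t)$ is $\rho(G_n)^{-1}$. Theorem~\ref{thm:spectral-strong} gives
\[
\rho(G_n)=\delta p^n+\varepsilon_n, \qquad \varepsilon_n=O(1).
\]
For large $n$ we have $\rho(G_n)\geq \delta p^n/2$, so
\[
\left|\frac1{\rho(G_n)}-\frac1{\delta p^n}\right|
=\frac{|\varepsilon_n|}{\rho(G_n)\,\delta p^n}
\leq \frac{2|\varepsilon_n|}{\delta^2p^{2n}}
=O(p^{-2n}).
\]

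The only step needing care is the finiteness argument, specifically the fact that a finite $p$-group has $\rho=0$ because its augmentation ideal is nilpotent. Everything else is a direct specialization of Theorem~\ref{thm:spectral-strong}.
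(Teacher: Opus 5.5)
Your proof is correct and is essentially the paper's argument. Finiteness is ruled out because a finite $p$-group has nilpotent augmentation ideal and hence $\rho=0$, while the Golod--Shafarevich lower bound forces $\rho(G_n)\to\infty$; the paper phrases this as $\rho(G_n)>1$ and you as $\rho(G_n)>0$, which suffices equally. The radius estimate is the same inversion of $\rho(G_n)=\delta p^n+O(1)$, done with an explicit difference bound instead of the paper's expansion $(1+x)^{-1}=1+O(x)$.
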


\begin{proof}
The lower bound in \eqref{eq:rho-quadratic} tends to infinity with
$n$, and in particular is greater than one for all sufficiently
large $n$.  A finite $p$-group has nilpotent augmentation ideal over
$\Fp$, so its Hilbert series is a polynomial and its corresponding
growth invariant is zero.  Thus $\rho(G_n)>1$ forces $G_n$ to be
infinite.

Since
\[
 \rho(G_n)=\delta p^n+O(1),
\]
we may write $\rho(G_n)=\delta p^n+\varepsilon_n$ with
$\varepsilon_n=O(1)$.  Therefore
\[
 \rho(G_n)^{-1}
 =
 \frac{1}{\delta p^n}
 \left(1+\frac{\varepsilon_n}{\delta p^n}\right)^{-1}.
\]
As $\varepsilon_n/(\delta p^n)=O(p^{-n})$, the expansion
$(1+x)^{-1}=1+O(x)$ gives
\[
 \rho(G_n)^{-1}
 =
 \frac{1}{\delta p^n}
 +O(p^{-2n}).
\]
\end{proof}

\section{Zassenhaus growth}\label{sec:zassenhaus}

We now turn to the Zassenhaus filtration.  Recall from
\eqref{eq:jennings} in Section~\ref{sec:preliminaries} that the
Jennings--Lazard identity gives
\[
 H_n(t)
 =
 \prod_{j\geq1}
 \left(\frac{1-t^{pj}}{1-t^j}\right)^{a_{j,n}},
\]
where
\[
 a_{j,n}
 =
 \dim_{\Fp}D_j(G_n)/D_{j+1}(G_n).
\]
We use this identity to compare the Zassenhaus quotients of $G_n$
with those of a free pro-$p$ group.  We begin by studying the
logarithm of $H_n(t)$. For a formal power series $F(t)$, we write $[t^m]F(t)$ for the
coefficient of $t^m$ in $F(t)$.
\begin{lemma}\label{lem:log-H}
Assume $\mu>0$.  For every fixed $m\geq2$,
\begin{equation}\label{eq:log-H}
 [t^m]\log H_n(t)
 =
 \frac{d_n^m}{m}
 +O_m(r_nd_n^{m-2}).
\end{equation}
For $m=1$, the coefficient is $d_n$.
\end{lemma}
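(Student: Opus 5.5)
Write $H_n(t)=1+E_n(t)$ with $E_n(t)=\sum_{k\ge1}c_{k,n}t^k$, and set $F_n(t)=(1-d_nt)^{-1}$, the Hilbert series of the free pro-$p$ group on $d_n$ generators. The plan is to compare $\log H_n$ with $\log F_n$ coefficient by coefficient. We have $[t^m]\log F_n(t)=d_n^m/m$ exactly. It therefore suffices to show that $[t^m]\bigl(\log F_n-\log H_n\bigr)=O_m(r_nd_n^{m-2})$ for each fixed $m\ge2$, and that the $t^1$ coefficient of $\log H_n$ is $c_{1,n}=d_n$. The latter is immediate, since $c_{1,n}=d_n$ by Theorem~\ref{thm:augmentation}.

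For fixed $m$, the coefficient $[t^m]\log(1+E_n(t))$ is a universal polynomial in $c_{1,n},\dots,c_{m,n}$:
\[
 [t^m]\log H_n(t)=\sum_{k=1}^m\frac{(-1)^{k-1}}{k}\sum_{\substack{i_1+\cdots+i_k=m\\ i_j\ge1}}c_{i_1,n}\cdots c_{i_k,n}.
\]
The same polynomial evaluated at $c_{i,n}=d_n^i$ gives $d_n^m/m$. Write $c_{i,n}=d_n^i-\epsilon_{i,n}$. By Lemma~\ref{lem:fixed-degree}, $\epsilon_{1,n}=0$ and $0\le\epsilon_{i,n}\le r_nS_i(d_n)=O_i(r_nd_n^{i-2})$ for $i\ge2$.

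Expand each product $c_{i_1,n}\cdots c_{i_k,n}$ around $d_n^{i_1}\cdots d_n^{i_k}=d_n^m$. Every term other than $d_n^m$ contains at least one factor $\epsilon_{i_j,n}$. A term with a nonempty set $J$ of indices replaced by $\epsilon$'s is bounded in absolute value by
\[
 \prod_{j\in J}O(r_nd_n^{i_j-2})\cdot\prod_{j\notin J}d_n^{i_j}=O_m\bigl(r_n^{|J|}d_n^{m-2|J|}\bigr).
\]
Since $\mu>0$, Proposition~\ref{prop:dr-growth} gives $r_n=O(d_n)$, so $r_n/d_n^2=O(p^{-n})$. Hence every term with $|J|\ge2$ is $O_m(r_nd_n^{m-2})$. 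The number of compositions of $m$ and of subsets $J$ is bounded in terms of $m$ alone, and the coefficients $1/k$ are fixed. Summing therefore gives $[t^m]\log H_n=d_n^m/m+O_m(r_nd_n^{m-2})$.

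The only point needing care is the terms with $|J|\ge2$. Without the hypothesis $r_n=O(d_n^2)$, the products of several $\epsilon$'s could exceed the stated error. Here this hypothesis is supplied by Proposition~\ref{prop:dr-growth}, so no genuine obstacle remains; everything else is bookkeeping with a number of terms depending only on $m$.
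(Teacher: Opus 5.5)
Your proposal is correct and follows essentially the same route as the paper: expand $[t^m]\log H_n(t)$ over compositions of $m$, write $c_{i,n}=d_n^i-\varepsilon_{i,n}$, bound each subset term by $r_n^{s}d_n^{m-2s}=r_nd_n^{m-2}\,(r_n/d_n^2)^{s-1}$, and use $r_n=O(d_n)$ from Proposition~\ref{prop:dr-growth}. You also correctly observe that only $r_n=O(d_n^2)$ is actually needed at that step.
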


\begin{proof}
For $j\geq2$, write
\[
 c_{j,n}=d_n^j-\varepsilon_{j,n},
\]
and $\varepsilon_{1,n}=0$. By Theorem~\ref{thm:augmentation}, for each fixed $m$ there is a
constant $C_m$ such that
\begin{equation}\label{eq:epsilon-bound}
 0\leq\varepsilon_{j,n}
 \leq C_m r_nd_n^{j-2}
\end{equation}
for $2\leq j\leq m$. We shall compare the coefficient of $\log H_n(t)$ with the
corresponding coefficient for the free Hilbert series.

Since
\[
 H_n(t)=1+\sum_{j\geq1}c_{j,n}t^j,
\]
the formal identity
$\log(1+X)=\sum_{k\geq1}(-1)^{k+1}X^k/k$ gives
\begin{equation}\label{eq:log-coefficient-compositions}
 [t^m]\log H_n(t)
 =
 \sum_{k=1}^m\frac{(-1)^{k+1}}{k}
 \sum_{\substack{j_1+\cdots+j_k=m\\ j_\nu\geq1}}
 \prod_{\nu=1}^k c_{j_\nu,n}.
\end{equation}
For a free pro-$p$ group on $d_n$ generators one has
$c_j^{\mathrm{free}}=d_n^j$ for every $j$, and hence
\[
 \log\frac{1}{1-d_nt}
 =
 \sum_{m\geq1}\frac{d_n^m}{m}t^m.
\]
Applying \eqref{eq:log-coefficient-compositions} to the free series
therefore gives
\begin{equation}\label{eq:free-log-composition}
 \frac{d_n^m}{m}
 =
 \sum_{k=1}^m\frac{(-1)^{k+1}}{k}
 \sum_{\substack{j_1+\cdots+j_k=m\\ j_\nu\geq1}}
 \prod_{\nu=1}^k d_n^{j_\nu}.
\end{equation}

Subtracting \eqref{eq:free-log-composition} from
\eqref{eq:log-coefficient-compositions}, we obtain
\begin{equation}\label{eq:log-difference}
 [t^m]\log H_n(t)-\frac{d_n^m}{m}
 =
 \sum_{k=1}^m\frac{(-1)^{k+1}}{k}
 \sum_{\substack{j_1+\cdots+j_k=m\\ j_\nu\geq1}}
 \left(
 \prod_{\nu=1}^k c_{j_\nu,n}
 -
 d_n^m
 \right).
\end{equation}
Fix a composition $(j_1,\ldots,j_k)$ of $m$.  Since
$c_{j_\nu,n}=d_n^{j_\nu}-\varepsilon_{j_\nu,n}$, expansion of the
product gives
\[
 \prod_{\nu=1}^k c_{j_\nu,n}-d_n^m
 =
 \sum_{\varnothing\neq S\subseteq\{1,\ldots,k\}}
 (-1)^{|S|}
 \left(\prod_{\nu\in S}\varepsilon_{j_\nu,n}\right)
 d_n^{\,m-\sum_{\nu\in S}j_\nu}.
\]
Terms for which some $j_\nu=1$ and $\nu\in S$ vanish because
$\varepsilon_{1,n}=0$.  Thus we may assume $j_\nu\geq2$ for every
$\nu\in S$.  If $|S|=s$, then \eqref{eq:epsilon-bound} gives
\begin{align*}
 \left|
 \left(\prod_{\nu\in S}\varepsilon_{j_\nu,n}\right)
 d_n^{\,m-\sum_{\nu\in S}j_\nu}
 \right|
 &\ll_m
 \left(\prod_{\nu\in S}
 r_nd_n^{j_\nu-2}\right)
 d_n^{\,m-\sum_{\nu\in S}j_\nu} \\
 &=
 r_n^s d_n^{m-2s}.
\end{align*}
By Proposition~\ref{prop:dr-growth}, $r_n=O(d_n)$ and
$d_n\to\infty$.  Hence, for every fixed $s\geq1$,
\begin{equation}\label{eq:multiple-error-bound}
 r_n^s d_n^{m-2s}
 =
 r_nd_n^{m-2}
 \left(\frac{r_n}{d_n^2}\right)^{s-1}
 =
 O_m(r_nd_n^{m-2}),
\end{equation}
where we have also used
$r_n/d_n^2=O(p^{-n})$ from \eqref{eq:r-over-d2}.

For fixed $m$, there are only finitely many compositions of $m$ and
only finitely many subsets $S$ occurring in the above expansions;
their total number depends only on $m$.  Combining
\eqref{eq:log-difference} with
\eqref{eq:multiple-error-bound} therefore gives
\[
 [t^m]\log H_n(t)-\frac{d_n^m}{m}
 =
 O_m(r_nd_n^{m-2}),
\]
which proves \eqref{eq:log-H}.

Finally, since $H_n(t)=1+d_nt+O(t^2)$, one has
$[t]\log H_n(t)=d_n$, proving the degree-one assertion.
\end{proof}

\begin{theorem}
\label{thm:zassenhaus-free}
Assume that $\mu>0$.  Then, for every fixed $m\geq2$, as
$n\to\infty$,
\begin{equation}\label{eq:zass-free}
 a_{m,n}
 =
 w_m^{(p)}(d_n)
 +O_m(r_nd_n^{m-2}).
\end{equation}
In particular,
\begin{equation}\label{eq:zass-leading}
 a_{m,n}
 =
 \frac{\delta^m}{m}p^{mn}
 +O_m(p^{(m-1)n}).
\end{equation}
For $m=1$, one has $a_{1,n}=d_n$.
\end{theorem}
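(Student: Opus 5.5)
The plan is to take logarithms of the Jennings--Lazard identity \eqref{eq:jennings} for $G_n$ and compare it term by term with the free identity \eqref{eq:free-restricted-hilbert} for $d=d_n$.

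\textbf{Logarithmic recursion.} Taking formal logarithms of \eqref{eq:jennings} and extracting the coefficient of $t^m$ gives, exactly as in the derivation of \eqref{eq:free-restricted-recursion},
\[
 m\,[t^m]\log H_n(t)
 =
 \sum_{j\mid m}j\,a_{j,n}
 -
 p\sum_{pj\mid m}j\,a_{j,n}.
\]
Subtracting the free recursion \eqref{eq:free-restricted-recursion} with $d=d_n$ gives, for $b_{j,n}:=a_{j,n}-w_j^{(p)}(d_n)$,
\[
 m\,b_{m,n}
 =
 m\Bigl([t^m]\log H_n(t)-\frac{d_n^m}{m}\Bigr)
 -\sum_{j\mid m,\,j<m}j\,b_{j,n}
 +p\sum_{pj\mid m}j\,b_{j,n}.
\]
Lemma~\ref{lem:log-H} bounds the first term on the right by $O_m(r_nd_n^{m-2})$.

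\textbf{Strong induction on $m$.} The base case is $m=1$, where $a_{1,n}=d_n=w_1^{(p)}(d_n)$, so $b_{1,n}=0$; this also gives the degree-one claim, since $D_1/D_2$ is the Frattini quotient. For $m\geq2$, every index $j$ on the right is a proper divisor of $m$. Such a $j$ is either $j=1$, contributing $0$, or satisfies $2\le j\le m/2$, in which case induction gives $b_{j,n}=O_j(r_nd_n^{j-2})$. Since $j-2<m-2$ and $d_n\to\infty$, each such term is $O_m(r_nd_n^{m-2})$. There are only boundedly many terms in terms of $m$, so $b_{m,n}=O_m(r_nd_n^{m-2})$, which is \eqref{eq:zass-free}.

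\textbf{Leading term.} The key input is Lemma~\ref{lem:free-restricted-lie-dimensions}(2), which gives $w_m^{(p)}(d_n)=d_n^m/m+O_m(d_n^{m-1})$. Proposition~\ref{prop:dr-growth} gives $r_n=O(d_n)$, so the error in \eqref{eq:zass-free} is $O_m(d_n^{m-1})$. Finally, $d_n=\delta p^n+O(1)$ and the binomial theorem give $d_n^m/m=\delta^mp^{mn}/m+O_m(p^{(m-1)n})$ and $d_n^{m-1}=O(p^{(m-1)n})$, which proves \eqref{eq:zass-leading}.

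\textbf{Main obstacle.} The only delicate point is the bookkeeping in the induction. The correction terms for $j<m$ carry errors with smaller exponents $r_nd_n^{j-2}$, and these must be absorbed into $O_m(r_nd_n^{m-2})$. This needs only $d_n\geq1$, together with the fact that the total number of divisor terms depends on $m$ alone. The analytic content is already contained in Lemma~\ref{lem:log-H}.
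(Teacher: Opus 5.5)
Your proposal is correct and is essentially the paper's own proof. Like the paper, you take logarithms of the Jennings--Lazard identity to get the divisor recursion, subtract the free recursion with $d=d_n$, and induct on $m$: the base case is $a_{1,n}=w_1^{(p)}(d_n)=d_n$, Lemma~\ref{lem:log-H} controls the top term, and lower-index errors are absorbed using $d_n\geq1$. The leading term then follows from Lemma~\ref{lem:free-restricted-lie-dimensions}(2), $r_n=O(d_n)$, and $d_n=\delta p^n+O(1)$. The only difference is notation: your $b_{j,n}=a_{j,n}-w_j^{(p)}(d_n)$ is the paper's $\Delta_{j,n}$, and the paper uses $b_{m,n}$ for $[t^m]\log H_n(t)$.
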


\begin{proof}
For each $m\geq1$, put
\[
 b_{m,n}=[t^m]\log H_n(t).
\]
We first record explicitly the recursion obtained from the
Jennings--Lazard identity.  By \eqref{eq:jennings},
\[
 H_n(t)
 =
 \prod_{j\geq1}
 \left(\frac{1-t^{pj}}{1-t^j}\right)^{a_{j,n}}.
\]
Since $H_n(0)=1$, its formal logarithm is well defined in
$\Q[[t]]$.  Using
\[-\log(1-u)=\sum_{k\geq1}u^k/k,\] we obtain
\[
 \log H_n(t)
 =
 \sum_{j\geq1}a_{j,n}
 \left(
 \sum_{k\geq1}\frac{t^{jk}}{k}
 -
 \sum_{k\geq1}\frac{t^{pjk}}{k}
 \right).
\]
For a fixed $m$, only finitely many pairs $(j,k)$ contribute to the
coefficient of $t^m$.  In the first double sum, a contribution occurs
precisely when $jk=m$, and its contribution is
$a_{j,n}/k=ja_{j,n}/m$.  In the second sum, a contribution occurs
precisely when $pjk=m$.  Thus
\begin{equation}\label{eq:jennings-log-recursion}
 m b_{m,n}
 =
 \sum_{j\mid m}j\,a_{j,n}
 -
 p\mathbf 1_{p\mid m}
 \sum_{j\mid m/p}j\,a_{j,n}.
\end{equation}

We compare this recursion with the corresponding identity for the
free pro-$p$ group on $d_n$ generators.  Its Hilbert series is
$(1-d_nt)^{-1}$, and therefore
\[
 [t^m]\log\frac{1}{1-d_nt}
 =
 \frac{d_n^m}{m}.
\]
Its $j$-th Zassenhaus dimension is
$w_j^{(p)}(d_n)$.  Hence
\begin{equation}\label{eq:free-jennings-log-recursion}
 d_n^m
 =
 \sum_{j\mid m}j\,w_j^{(p)}(d_n)
 -
 p\mathbf 1_{p\mid m}
 \sum_{j\mid m/p}j\,w_j^{(p)}(d_n).
\end{equation}

Set
\[
 \Delta_{j,n}
 =
 a_{j,n}-w_j^{(p)}(d_n).
\]
Subtracting \eqref{eq:free-jennings-log-recursion} from
\eqref{eq:jennings-log-recursion} gives
\begin{equation}\label{eq:zass-difference-recursion}
 m\Delta_{m,n}
 =
 \bigl(mb_{m,n}-d_n^m\bigr)
 -
 \sum_{\substack{j\mid m\\j<m}}j\,\Delta_{j,n}
 +
 p\mathbf 1_{p\mid m}
 \sum_{j\mid m/p}j\,\Delta_{j,n}.
\end{equation}
Notice that every index occurring in the last two sums is strictly
smaller than $m$.

We now proceed by induction on $m$.  In degree one,
\[
 a_{1,n}
 =
 \dim_{\Fp}D_1(G_n)/D_2(G_n)
 =
 d_n,
\]
because $D_2(G_n)=\Phi(G_n)$, while
$w_1^{(p)}(d_n)=d_n$.  Hence $\Delta_{1,n}=0$.

Fix $m\geq2$ and suppose that
\[
 \Delta_{j,n}
 =
 O_j(r_nd_n^{j-2})
\]
has been proved for every $2\leq j<m$.  By
Lemma~\ref{lem:log-H},
\[
 mb_{m,n}-d_n^m
 =
 O_m(r_nd_n^{m-2}).
\]
For every divisor $j<m$ with $j\geq2$, the induction hypothesis gives
\[
 j\,\Delta_{j,n}
 =
 O_m(r_nd_n^{j-2})
 =
 O_m(r_nd_n^{m-2}),
\]
where in the last step we use $d_n\geq1$, which holds for all
sufficiently large $n$.  The same estimate applies to every term in
the final sum of \eqref{eq:zass-difference-recursion}; its indices
satisfy $j\leq m/p<m$.  The terms with $j=1$ vanish because
$\Delta_{1,n}=0$.  Since, for fixed $m$, only finitely many divisors
occur, \eqref{eq:zass-difference-recursion} gives
\[
 m\Delta_{m,n}
 =
 O_m(r_nd_n^{m-2}).
\]
Dividing by the fixed integer $m$ proves
\eqref{eq:zass-free}.

It remains to deduce the stated leading term.  By
Lemma~\ref{lem:free-restricted-lie-dimensions}, for fixed $m$,
\[
 w_m^{(p)}(d_n)
 =
 \frac{d_n^m}{m}+O_m(d_n^{m-1}).
\]
Moreover, Proposition~\ref{prop:dr-growth} gives $r_n=O(d_n)$.
Consequently, \eqref{eq:zass-free} yields
\[
 a_{m,n}
 =
 \frac{d_n^m}{m}+O_m(d_n^{m-1}).
\]
Since $\mu>0$, one has $\delta>0$ and
$d_n=\delta p^n+O(1)$.  For fixed $m$,
\[
 d_n^m
 =
 \delta^m p^{mn}+O_m(p^{(m-1)n})
 \quad\text{and}\quad
 d_n^{m-1}=O_m(p^{(m-1)n}).
\]
Substituting these estimates proves \eqref{eq:zass-leading}.
\end{proof}

The second Zassenhaus degree admits a more precise description which
does not require the assumption $\mu>0$.

\begin{proposition}
\label{prop:zass2}
For each $n$, put
\begin{equation}\label{eq:q2-def}
 q_{2,n}=d_n^2-c_{2,n}.
\end{equation}
Then
\begin{equation}\label{eq:q2-bound}
 0\leq q_{2,n}\leq r_n.
\end{equation}
Since $p$ is odd, one furthermore has the exact identity
\begin{equation}\label{eq:a2-exact}
 a_{2,n}
 =
 \binom{d_n}{2}-q_{2,n}.
\end{equation}
Consequently,
\begin{equation}\label{eq:a2-sandwich}
 \binom{d_n}{2}-r_n
 \leq
 a_{2,n}
 \leq
 \binom{d_n}{2}.
\end{equation}
\end{proposition}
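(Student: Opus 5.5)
The bound \eqref{eq:q2-bound} is the case $m=2$ of Lemma~\ref{lem:fixed-degree}. There $S_2(d_n)=1$, so \eqref{eq:fixed-bound} reads $d_n^2-r_n\leq c_{2,n}\leq d_n^2$. That lemma uses only a minimal presentation and not $\mu>0$, so this step is immediate. Concretely, the degree-two part of the relation ideal is spanned by the quadratic initial forms $f_{i,n,2}$, which gives at most $r_n$ dimensions of loss.

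For the exact identity \eqref{eq:a2-exact}, I will compare coefficients of $t^2$ in the Jennings--Lazard identity \eqref{eq:jennings}. Since $p$ is odd, $p\geq3$, so the factor $(1-t^{pj})/(1-t^j)=1+t^j+t^{2j}+\cdots$ is correct through order $t^2$ for every $j$. Modulo $t^3$, the product is
\[
 (1+t+t^2)^{a_{1,n}}(1+t^2)^{a_{2,n}}
 \equiv 1+a_{1,n}t+\Bigl(a_{1,n}+\tbinom{a_{1,n}}{2}+a_{2,n}\Bigr)t^2 .
\]
The $t^2$ coefficient of $(1+t+t^2)^{a}$ counts choosing one factor to contribute $t^2$, giving $a$ terms, or two factors each contributing $t$, giving $\binom{a}{2}$ terms. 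Using $a_{1,n}=d_n$ (because $D_2(G_n)=\Phi(G_n)$), this yields
\[
 c_{2,n}=d_n+\binom{d_n}{2}+a_{2,n}=\binom{d_n+1}{2}+a_{2,n}.
\]
Hence
\[
 a_{2,n}=c_{2,n}-\binom{d_n+1}{2}=\binom{d_n}{2}-\bigl(d_n^2-c_{2,n}\bigr),
\]
since $d_n^2-\binom{d_n+1}{2}=\binom{d_n}{2}$. This is \eqref{eq:a2-exact}.

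Alternatively, I could apply \eqref{eq:jennings-log-recursion} with $m=2$. Because $p\nmid2$, it gives $2b_{2,n}=a_{1,n}+2a_{2,n}$, and $b_{2,n}=c_{2,n}-d_n^2/2$ comes from expanding the logarithm. This gives the same formula and is a useful cross-check; it also matches $w_2^{(p)}(d)=\binom d2$ from Lemma~\ref{lem:free-restricted-lie-dimensions} when $q_{2,n}=0$.

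Finally, \eqref{eq:a2-sandwich} follows by substituting $0\leq q_{2,n}\leq r_n$ into \eqref{eq:a2-exact}. The only delicate point is the role of $p$ odd. For $p=2$ the factor $(1-t^{2})/(1-t)$ equals $1+t$, with no $t^2$ term, so the coefficient count changes. I will flag this explicitly in the proof, since it is the one place where the hypothesis enters.
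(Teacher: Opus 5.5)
Your proof is correct and follows the paper's argument: the bound $0\le q_{2,n}\le r_n$ is the case $m=2$ of Lemma~\ref{lem:fixed-degree}, where $S_2(d_n)=1$, and the identity for $a_{2,n}$ comes from the $t^2$-coefficient of the Jennings--Lazard product together with $a_{1,n}=d_n$. The only difference is cosmetic: you expand the product modulo $t^3$ directly, whereas the paper uses the logarithmic recursion \eqref{eq:jennings-log-recursion} with $m=2$, which is exactly the cross-check you describe.
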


\begin{proof}
Taking $m=2$ in Lemma~\ref{lem:fixed-degree}, we have
$S_2(d_n)=1$.  Therefore
\[
 d_n^2-r_n
 \leq
 c_{2,n}
 \leq
 d_n^2.
\]
Since $q_{2,n}=d_n^2-c_{2,n}$, this is equivalent to
\eqref{eq:q2-bound}.

We now recover the second Zassenhaus dimension from $c_{2,n}$.
Since
\[
 H_n(t)
 =
 1+d_nt+c_{2,n}t^2+O(t^3),
\]
the formal expansion of the logarithm gives
\begin{equation}\label{eq:log-H-degree-two}
 b_{2,n}
 =
 [t^2]\log H_n(t)
 =
 c_{2,n}-\frac{d_n^2}{2}.
\end{equation}
On the other hand, because $p$ is odd, one has $p\nmid2$.
Taking $m=2$ in the Jennings logarithmic recursion
\eqref{eq:jennings-log-recursion} therefore gives
\[
 2b_{2,n}
 =
 a_{1,n}+2a_{2,n}.
\]
Since $a_{1,n}=d_n$, it follows that
\[
 a_{2,n}
 =
 b_{2,n}-\frac{d_n}{2}.
\]
Using \eqref{eq:log-H-degree-two}, we obtain
\[
 a_{2,n}
 =
 c_{2,n}-\frac{d_n^2+d_n}{2}.
\]
Finally, substituting
$c_{2,n}=d_n^2-q_{2,n}$ gives
\[
 a_{2,n}
 =
 \frac{d_n^2-d_n}{2}-q_{2,n}
 =
 \binom{d_n}{2}-q_{2,n},
\]
which proves \eqref{eq:a2-exact}.  Combining this identity with
$0\leq q_{2,n}\leq r_n$ gives
\eqref{eq:a2-sandwich}.
\end{proof}

\printbibliography

\end{document}